\newtheorem{theorem}{Theorem}[section]
\newtheorem{lemma}[theorem]{Lemma}
\newtheorem{proposition}[theorem]{Proposition}
\theoremstyle{definition}
\newtheorem{example}[theorem]{Example}
\theoremstyle{remark}
\newtheorem{remark}[theorem]{Remark}
\newtheorem{question}[theorem]{Question}
\newcommand{\bP}{\mathbb P}
\newcommand{\bC}{\mathbb C}
\newcommand{\bR}{\mathbb R}
\newcommand{\bZ}{\mathbb Z}
\newcommand{\aut}{\mathrm{Aut}}
\DeclareMathOperator{\im}{Im}
\DeclareMathOperator{\re}{Re}
\DeclareMathOperator{\tr}{tr}
\DeclareMathOperator{\ad}{ad}
\begin{document}
\title[]{Umbilical points on three dimensional strictly pseudoconvex CR manifolds. I. Manifolds with $U(1)$-action.}
\author{Peter Ebenfelt}
\address{Department of Mathematics, University of California at San Diego, La Jolla, CA 92093-0112}
\email{pebenfelt@ucsd.edu}
\author{Duong Ngoc Son}
\address{Department of Mathematics, University of California at Irvine, CA 92697-3875}
\email{snduong@math.uci.edu}
\date{\today}
\thanks{The first author was supported in part by the NSF grant DMS-1301282.}
\begin{abstract} The question of existence of umbilical points, in the CR sense, on compact, three dimensional, strictly pseudoconvex CR manifolds was raised in the seminal paper by S.-S. Chern and J. K. Moser in 1974. In the present paper, we consider compact, three dimensional, strictly pseudoconvex CR manifolds that possess a free, transverse action by the circle group $U(1)$. We show that every such CR manifold $M$ has at least one orbit of umbilical points, {\it provided} that the Riemann surface $X:=M/U(1)$ is not a torus. In particular, every compact, circular and strictly pseudoconvex hypersurface in $\bC^2$ has at least one circle of umbilical points. The existence of umbilical points in the case where $X$ is a torus is left open in general, but it is shown that if such an $M$ has additional symmetries, in a certain sense, then it must possess umbilical points as well.
\end{abstract}

\thanks{2000 {\em Mathematics Subject Classification}. 32V05, 30F45}

\maketitle

\section{Introduction}\label{intro}

Let $M=M^{2n+1}$ be a strictly pseudoconvex CR manifold of dimension $2n+1$, where $n\geq 1$ denotes the CR dimension of $M$. The standard model for such a CR manifold is a sphere $S^{2n+1}$ in $\bC^{n+1}$. The CR structure of $M$, near any point $p\in M$, can be locally approximated by that of a sphere. (The reader is referred to the paper by Chern and Moser \cite{CM74} for precise statements and details.) If $n\geq 2$, then the generic order of this spherical approximation is 3 and the obstruction to higher order approximation is the nonvanishing of the Cartan-Chern-Moser CR curvature tensor $S=S_{\alpha}{}^\beta{}_{\mu\bar\nu}$. If $n=1$, then the generic order of the spherical approximation is 5 and the obstruction to higher order approximation is the nonvanishing of Cartan's 6th order invariant $\bar Q=Q^1{}_{\bar 1}$. Points on $M$ where these obstructions vanish are called (CR) {\it umbilical points}. It is well known \cite{CM74} that $M$ is locally spherical (i.e., locally CR equivalent to a sphere) near a point $p\in M$ if and only if $M$ is umbilical in an open neighborhood of $p$.

The equations for umbilical points for $n\geq 2$ are highly ``overdetermined'', as they are defined by the vanishing of a fourth order curvature tensor on the $n$-dimensional CR tangent space as the points range over a manifold of dimension $2n+1$; whereas in the case $n=1$, the equations are ``under-determined'', being defined by the vanishing of a single complex-valued ``function'' on a three dimensional CR manifold. The question of existence of umbilical points on compact, three dimensional CR manifolds (always assumed to be strictly pseudoconvex in this paper) was explicitly raised in \cite{CM74}. Examples of compact CR manifolds without umbilical points were found by E. Cartan in his classification \cite{Cartan33} of homogeneous, three dimensional CR manifolds. By Cartan's work, the only homogeneous, compact examples are given by
\begin{equation}\label{mualph}
\mu_\alpha:=\{[z_0:z_1:z_2]\in \bC\bP^2\colon |z_0|^2+|z_1|^2+|z_2|^2=\alpha
|z_0^2+z_1^2+z_2^2|\},\quad \alpha >1,
\end{equation}
and their covers, which were subsequently classified in \cite{Isaev06} as a $4\!:\! 1$ cover $\mu_\alpha^{(4)}$ (diffeomorphic to a sphere; see also below) that factors through a $2\!:\! 1$ cover $\mu_\alpha^{(2)}$, consisting of the intersection of sphere and a holomorphic quadric in $\bC^3$. It is an open question whether there exists a compact, three dimensional CR manifold without umbilical points, which embeds in $\bC^2$. (It is known that $\mu^{(2)}_\alpha\subset \bC^3$ does not embed in $\bC^2$, see e.g.\ \cite{Isaev14}, and $\mu_\alpha^{(4)}$ does not embed in any $\bC^{n+1}$ for any $n$, see \cite{Rossi65}.)

Before describing the main results in this paper, we shall mention briefly some known results regarding the existence of (CR) umbilical points in CR geometry, as well as some related results concerning umbilical points (of the second fundamental form) in classical geometry. In the case $n\geq 2$, Webster \cite{Webster00} proved that a generic real ellipsoid (i.e., without spherical sections) in $\bC^{n+1}$ has no umbilical points, which is perhaps not surprising in view of the overdetermined nature of the system of equations defining such points in the higher dimensional case. On the other hand, in the case $n=1$, Huang and Ji \cite{HuangJi07} showed that every real ellipsoid in $\bC^2$ has at least 4 umbilical points. One can also show that on a compact {\it real hypersurface of revolution} (as in \cite{Webster02}) in $\bC^2$, the real curve of points where the $U(1)$-action (induced by revolving) degenerates to fixed points must be umbilical; this is a direct consequence of the fact that there are at most two local CR automorphisms on a three dimensional CR manifold fixing a non-umbilical point \cite{CM74}; note that a generic ellipsoid in $\bC^2$ is not a a surface of revolution. Beyond this, not much is known (at least to the best of the authors' knowledge) regarding existence of umbilical points in CR geometry.

In the theory of surfaces in $\bR^3$, it is a classical conjecture of Carath\'{e}odory that every compact surface bounding a convex region in $\bR^3$ has at least 2 umbilical points, where a point on the surface is umbilical if the two eigenvalues of its second fundamental form are equal (or, equivalently, if the embedded surface can be osculated by a sphere to a higher-than-expected order). There is an extensive literature on this conjecture, with a proof in the real-analytic case by Hamburger \cite{Hamburger40}. While the idea in the proof of Carath\'{e}odory's Conjecture is simple, the details are very complicated. The main general tool is the Poincar\'e-Hopf Index Theorem, which asserts that the sum of the indices at singular points of the foliation by principal line fields (i.e., the umbilical points) equals the Euler characteristic of the surface, which implies that every surface with genus different from one must have umbilical points. (Standard embeddings of the torus have no umbilical points). To establish the Carath\'{e}odory Conjecture, it remains to show the (difficult and still open in general) local result, also known in a slightly different form as Loewner's Conjecture, that the index at an umbilical point does not exceed one. The theory of CR umbilical points on unit circle bundles over Riemann surfaces in the context of classical geometry of surfaces is discussed in Section \ref{CR-classical} below.

The main purpose of this paper is to study (CR) umbilical points on strictly pseudoconvex, three dimensional CR manifolds. We begin by formulating a special case of our main result as an existence theorem for umbilical points on compact, circular real hypersurfaces in $\bC^2$. Recall that a domain $\Omega\subset \bC^2$ is called {\it complete circular} if $z\in \Omega$ implies that the disk
$$D(z):=\{re^{it}z\in \bC^2\colon 0\leq r\leq 1,\ t\in \bR\}$$
is contained in $\Omega$. If $M$ is the boundary of a complete circular domain, then it follows that $U(1)$ acts freely on $M$ by rotations about the origin, $z\mapsto e^{it}z$ for $e^{it}\in U(1)$. In particular, potential umbilical points necessarily appear as circles, orbits of $U(1)$, in $M$.

\begin{theorem}\label{MainCirc} Let $M$ be a smooth, compact hypersurface in $\bC^2$ that bounds a complete circular domain. Then the set of umbilical points on $M$ contains at least one circle.
\end{theorem}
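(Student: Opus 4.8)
\emph{Strategy and set-up.} The plan is to realise $M$ as a compact, three dimensional CR manifold carrying a free, transverse $U(1)$-action whose orbit space is a two-sphere, and to conclude from our main theorem on CR manifolds with $U(1)$-action. Let $h$ be the Minkowski functional of $\Omega$, so that $\rho := h^2 - 1$ is a smooth, $U(1)$-invariant defining function for $M$, (real) homogeneous of degree two. Since $\Omega$ is bounded and nonempty, $D(z) \subset \Omega$ for each $z \in \Omega$ forces $0 \in \Omega$, hence $0 \notin M = \partial\Omega$, and the rotations $z \mapsto e^{it}z$ of $\bC^2$ restrict to $M$; the action is free, since a point of $M$ has a nonzero component. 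To see transversality to the Levi distribution $H$, fix $z \in M$, let $V = iz$ be the infinitesimal generator at $z$, and let $\ell_z = \bC z$; writing points of $\ell_z$ near $z$ as $e^{s+it}z$ one has $\rho(e^{s+it}z) = e^{2s} - 1$, independent of $t$ and with nonvanishing $s$-derivative, so $\ell_z \cap T_zM = \bR V$ is one dimensional. Hence $V \in T_zM$, while the complex line $\ell_z = \bC V$ is not contained in $T_zM$; as $H_z$ is the maximal complex subspace of $T_zM$, this yields $V \notin H_z$. Thus $M$ is a compact, three dimensional CR manifold with a free, transverse $U(1)$-action (strictly pseudoconvex wherever its Levi form is nondegenerate).

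\emph{The orbit space.} For each complex line $\ell$ through the origin the slice $\Omega \cap \ell$ is open, nonempty (as $\Omega$ contains a ball about $0$), rotation-invariant and star-shaped about $0$ inside $\ell$; being bounded it is therefore an open disk, so $M \cap \ell = \partial(\Omega \cap \ell)$ is a single circle, i.e.\ a single orbit. Equivalently $v \mapsto v/h(v)$ is a $U(1)$-equivariant diffeomorphism $\bS^3 \to M$, whence $X := M/U(1) \cong \bS^3/U(1) \cong \bC\bP^1$; in particular $X$ is a two-sphere, and in particular not a torus. (Alternatively $\pi_1(X) = 1$ follows from the homotopy exact sequence of the circle bundle $M \cong \bS^3 \to X$.)

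\emph{The strictly pseudoconvex case.} If $M$ is everywhere strictly pseudoconvex, our main theorem applies: a compact, three dimensional, strictly pseudoconvex CR manifold with a free, transverse $U(1)$-action whose quotient is not a torus has an orbit of umbilical points, and since $X \cong \bS^2$ this produces a circle of umbilical points on $M$. (The mechanism is the Chern-class, Poincar\'e--Hopf-type obstruction on $X$ alluded to in the Introduction, parallel to the fact that a closed surface of genus $\neq 1$ must carry umbilics of its principal line field: Cartan's sixth-order invariant $\bar Q$ is $U(1)$-invariant and descends along $\pi \colon M \to X$ to a smooth section of a CR-natural complex line bundle $\mathcal L \to X$ whose Chern number is a nonzero multiple of $\chi(X)$; as $\chi(\bS^2) = 2 \neq 0$ this section must vanish somewhere, and because the action preserves the CR structure, hence the umbilical set, that set is a union of orbits.)

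\emph{The remaining case, and the main obstacle.} A smooth compact hypersurface bounding a complete circular domain need not be globally strictly pseudoconvex --- there are such $M$ whose Levi form changes sign and hence degenerates along an orbit, where $\bar Q$ and its formulas in terms of pseudo-Hermitian data are undefined. Suppose $M$ is not strictly pseudoconvex, let $D \subset X$ be the (nonempty, closed, lower-dimensional) Levi-degeneracy set, and let $X^{\pm}$ be the two open regions on which the Levi form has each sign. One would replace $\bar Q$ by a minimal power of the Levi form times $\bar Q$, which extends smoothly across $D$ without changing the Chern number of the underlying line bundle, so that the obstruction still forces a zero on $X$; it then remains to push such a zero into $X^+ \cup X^-$ --- equivalently, to run a relative index argument on $\overline{X^+}$ and $\overline{X^-}$ with the boundary trivialisations coming from the leading behaviour of $\bar Q$ along $D$, their relative indices adding up to the nonzero number $\deg\mathcal L$. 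Making this precise --- especially when $D$ is not a single smooth circle separating $X^{\pm}$ --- is, I expect, where the real work lies; the reduction $X \cong \bS^2$ is routine by comparison.
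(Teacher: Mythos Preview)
Your argument in the strictly pseudoconvex case is exactly the paper's: verify that the rotation action is a free, transverse CR $U(1)$-action, identify $X=M/U(1)$ with $\bC\bP^1$ (the paper phrases this as ``blowing up the origin in $\bC^2$,'' citing Bland--Duchamp, while you give the direct slice/Minkowski-functional argument), and invoke Theorem~\ref{Main1}. Your verification of transversality via $\ell_z\cap T_zM=\bR\cdot iz$ is a nice explicit touch that the paper simply asserts.

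The ``remaining case'' you worry about is not part of the theorem. Strict pseudoconvexity is a standing hypothesis throughout the paper --- note the parenthetical ``(always assumed to be strictly pseudoconvex in this paper)'' in the introduction, and that the very definition of umbilical point via Cartan's invariant $\bar Q$ presupposes a nondegenerate Levi form. So Theorem~\ref{MainCirc} is to be read with that assumption in force, and the paper's proof accordingly consists of the single sentence reducing to Theorem~\ref{Main1}. Your final paragraph, with its proposed regularisation of $\bar Q$ across the Levi-degenerate set and relative index bookkeeping on $\overline{X^\pm}$, addresses a genuinely harder and genuinely different problem than the one stated; you can drop it entirely here.
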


\begin{remark} {\rm We note that a real ellipsoid in $\bC^2$ does not bound a circular domain, unless the ellipsoid degenerates to a sphere. Nevertheless, complete circular domains occur naturally in many contexts, e.g., as the Kobayashi indicatrix, or circular model, of a convex domain; see e.g. \cite{BlandDuchamp91}, \cite{Lempert81}.
}
\end{remark}

We now turn to the main results in this paper in the general context. These concern the existence of umbilical points on compact CR manifolds with a transverse, free CR $U(1)$-action (examples of which are given by the circular hypersurfaces in Theorem \ref{MainCirc}). The question of embeddability and deformations of such CR manifolds were investigated by, e.g., Epstein \cite{Epstein92}, Bland-Duchamp \cite{BlandDuchamp91}, and Lempert \cite{Lempert92}. Thus, let $M$ be a compact, strictly pseudoconvex, three dimensional CR manifold and assume that there is a free action of $U(1)$ on $M$ via CR automorphisms, such that the action is everywhere transverse to the CR tangent spaces of $M$; we shall refer to such an action as a {\it transverse, free} CR {\it $U(1)$-action}. We shall let $X$ denote the smooth compact surface obtained by $\pi\colon M\to X:=M/U(1)$. This Riemann surface can be given a complex structure by $T^{1,0}X:=\pi_* T^{1,0}M$ and $M$ can in fact be identified with the unit circle bundle in a positive holomorphic line bundle over $X$; see \cite{Epstein92} for details. We note that if $p\in M$ is an umbilical point, then the entire $U(1)$-orbit $\pi^{-1}(\pi(p))$ is umbilical. One of our main results is the following:

\begin{theorem}\label{Main1} Let $M$ be a compact, strictly pseudoconvex, three dimensional CR manifold with a transverse, free {\rm CR} $U(1)$-action. If the compact surface $X:=M/U(1)$ is not a torus, then the set of umbilical points contains at least one $U(1)$-orbit.
\end{theorem}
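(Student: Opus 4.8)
The plan is to reduce the existence of umbilical orbits to a statement about zeros of a section of a holomorphic line bundle over the Riemann surface $X$, and then invoke a degree/Euler-characteristic argument. Since $M$ is the unit circle bundle in a positive holomorphic line bundle $L\to X$ and the $U(1)$-action is transverse and free, every CR-invariant object descends to $X$. In particular, I expect Cartan's sixth-order invariant $\bar Q = Q^1{}_{\bar 1}$, which transforms as a section of an appropriate power of the canonical bundle $K_X$ (tensored with a power of $L$ or its dual — the precise weight to be pinned down from the transformation law of $Q$ under change of contact form and coframe, as recorded in \cite{CM74}), to be represented by a global section $\sigma$ of a holomorphic line bundle $E\to X$ built functorially from $K_X$ and $L$. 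The umbilical orbits of $M$ are then precisely the zeros of $\sigma$.

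The first step is therefore to carry out the descent explicitly: choose a local CR coframe adapted to the $S^1$-fibration (pull back a coframe on $X$ and add the connection form of $L$), compute $Q$ in this coframe using the structure equations, and check that $Q$ depends only on the base point, i.e.\ is $U(1)$-invariant. This is plausible because the whole Cartan/Chern--Moser apparatus is natural and the $U(1)$-action is by CR automorphisms, so $Q$ is carried to itself; the transversality guarantees that $Q$ at a point is determined by the CR data along the fiber, which is homogeneous. The second step is to identify the line bundle $E$: tracking weights, I expect $E \cong K_X^{\otimes 2}\otimes L^{\otimes k}$ for some integer $k$ (possibly $k=0$), or more likely a bundle whose degree is a fixed multiple of $\deg K_X = 2g-2$ plus a multiple of $\deg L>0$. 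The third step is the counting argument: a holomorphic (or at least smooth, with isolated zeros of well-defined local degree) section of $E$ over a compact Riemann surface has total zero count equal to $\deg E$; if $\deg E \neq 0$ the section cannot be nowhere vanishing, so $\sigma$ has a zero, hence $M$ has an umbilical orbit.

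The crux is thus to show $\deg E \neq 0$ whenever $X$ is not a torus. If $E$ turns out to be (a power of) $K_X$ alone, this is immediate: $\deg K_X^{\otimes 2} = 2(2g-2)$ vanishes exactly when $g=1$, i.e.\ exactly for the torus, matching the hypothesis perfectly. If $E$ also involves $L$, one needs that the $L$-contribution does not conspire to cancel the $K_X$-contribution; here positivity of $L$ (so $\deg L>0$) and a sign/parity check should suffice, and the torus case genuinely is the borderline case where things can cancel, which is why it is left open. A secondary technical point is that $Q$ is only known to be smooth, not holomorphic, in general, so the ``number of zeros $=$ degree'' statement must be justified via the Poincar\'e--Hopf / Chern class argument for sections of complex line bundles (the zero set of a generic smooth section counted with index equals $\langle c_1(E),[X]\rangle$), or by first perturbing within the class of CR structures; alternatively one shows $Q$ satisfies a $\bar\partial$-type equation making its zeros have positive index. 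The main obstacle I anticipate is step one: verifying $U(1)$-invariance of $Q$ and, above all, pinning down the exact line bundle $E$ and its degree from the transformation rules — the bookkeeping of conformal weights in the Chern--Moser normalization is where errors would hide, and getting the degree formula to read ``$\neq 0 \iff g\neq 1$'' is the whole ballgame.
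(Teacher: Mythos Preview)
Your proposal is correct and follows essentially the same route as the paper: the Cartan invariant descends to a global smooth section $\Xi$ of $\kappa_X^{\otimes 2}$ (so indeed $k=0$, no $L$-contribution), and the paper's Theorem~\ref{PHthm} proves the index formula $\sum_k \iota(z_k)=\chi(X)$ by a Gauss--Bonnet argument on this bundle, which immediately yields an umbilical orbit whenever $g\neq 1$. Concretely, the section is built by normalizing the coframe so that $Q\equiv 1$, which forces $|\lambda|^2\lambda^2=-r$, and then $\Xi:=(\lambda\,dz)^{\otimes 2}$ is globally well defined (the sign ambiguity in $\lambda$ disappears) and vanishes exactly on the umbilical locus---so your anticipated bookkeeping worry resolves cleanly with $\deg E=2(2g-2)$.
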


The reader should recognize that Theorem \ref{MainCirc} is a special case of Theorem \ref{Main1}, in which case the Riemann surface $X$ is $\bC\bP^1$ (obtained by blowing up the origin in $\bC^2$; see also \cite{BlandDuchamp91}). The proof of Theorem \ref{Main1} follows by realizing $M$ as a unit circle bundle in a positive line bundle over $X$, as described in \cite{Epstein92}, and applying our second main result, Theorem \ref{PHthm} below, which is a Poincar\'e-Hopf type index theorem for umbilical points on circle bundles over Riemann surfaces. We describe here the result roughly, and refer to Sections \ref{prelim} and \ref{PHsection} for the precise statement, definitions, and further details. Let $M$ be as in Theorem \ref{Main1}. If the umbilical points on $M$ consist of isolated $U(1)$-orbits, $O_1,\ldots, O_n\subset M$, we can define (roughly) an index of the umbilical orbit $O_k$ as follows: Pick a small piece of a surface in $M$ transverse to the umbilical orbit $O_k$, let $S$ be a small, simple closed curve (``circle'') in this surface, circling $O_k$ once, and let the index $\iota(O_k)$ be $-1/2$ of the topological degree of $Q/|Q|\colon S\to S^1$, where $Q$ denotes (the complex conjugate of) Cartan's 6th order tensor (whose zero locus defines the umbilical set). The result, Theorem \ref{PHthm} below, states that the following formula holds:
\begin{equation}\label{PHformula0}
\sum_{k=1}^n \iota(O_k)=\chi(X),
\end{equation}
where $\chi(X)=2-2g$ denotes the Euler characteristic of $X=M/U(1)$. Theorem \ref{Main1} of course follows from this result. As in the case of Loewner's Conjecture in classical geometry, a local analysis of the indices of the umbilical orbits could potentially provide a more precise estimate for the number of umbilical orbits on $M$ in terms of the genus of $X$. For instance, if the index of an umbilical orbit is always bounded above by one (as predicted in Loewner's Conjecture), then formula \eqref{PHformula0} would imply that a circular hypersurface as in Theorem \ref{MainCirc} has at least two umbilical orbits. This would correspond to the Carath\'{e}odory Conjecture in classical geometry. We should mention that the authors do not know if the foliation associated with CR umbilical points is ``Hessian'' in a sense that would make Loewner's Conjecture itself relevant to the local study. This is discussed and explained in more detail in Section \ref{Loewner} below; see in particular Question \ref{QA} regarding the possible Hessian nature of the CR umbilical foliation. We note, however, that for a ``generic'' umbilical orbit, the index will be $\pm1/2$ (c.f.\ e.g.\ subsection \ref{extrinsic} below) and we may conclude from formula \eqref{PHformula0} that a generic circular hypersurface as in Theorem \ref{MainCirc} should have at least 4 umbilical orbits.

We note that formula \eqref{PHformula0} leaves open the possibility that there are CR manifolds $M$ as in Theorem \ref{Main1} without umbilical points, {\it provided} that the Riemann surface $X$ is a torus. The authors do not know if there are such $M$ without umbilical points. However, we can show that if $M$ also possesses certain additional symmetries, then there must be umbilical points on $M$, even when $X$ is a torus; this is the content of Theorem \ref{Maintorus} below. We shall denote by $\aut(M)$ the group of CR automorphisms of a CR manifold $M$. It is well known that $\aut(M)$, for a compact, strictly pseudoconvex CR manifold $M$ is a Lie group (this follows from the work of Cartan and Chern-Moser in this case; cf. also, e.g., \cite{LMZ08}). Our standing assumption that $M$ has a $U(1)$-action implies in particular that $\dim_\bR\aut (M)\geq 1$. Our final result stated in this introduction is the following:

\begin{theorem}\label{Maintorus} Let $M$ be a compact, strictly pseudoconvex, three dimensional CR manifold with a transverse, free {\rm CR} $U(1)$-action. If $\dim_\bR\aut (M)\geq 2$, then the set of umbilical points contains at least one $U(1)$-orbit.
\end{theorem}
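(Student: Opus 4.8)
The plan is to reduce, via Theorem~\ref{Main1}, to the case where $X=M/U(1)$ is a torus, and then to argue by contradiction: assuming $M$ has no umbilical points, I will use the rigidity of non-umbilical points to pin down $\aut(M)$, deduce that $M$ carries an extra one-parameter group of CR symmetries commuting with the $U(1)$-action, and exploit the resulting cohomogeneity-one structure to force a zero of Cartan's invariant $Q$.

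So assume $X$ is a torus and, for contradiction, that the umbilical set of $M$ is empty. By \cite{CM74}, a non-umbilical point of a three-dimensional CR manifold is fixed by at most two local CR automorphisms, so the isotropy subgroup of $\aut(M)^{\circ}$ at each $p\in M$ is finite; hence the evaluation map $\mathfrak g\to T_pM$ is injective for every $p$, where $\mathfrak g$ denotes the Lie algebra of infinitesimal CR automorphisms. In particular $2\le\dim_{\bR}\mathfrak g\le 3$. Let $T\in\mathfrak g$ generate the $U(1)$-action, and realize $M$, following \cite{Epstein92}, as the unit circle bundle of a positive Hermitian line bundle over $X$, so that the CR structure is encoded by a positive curvature potential $u$ on $X$.

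Suppose first $\dim_{\bR}\mathfrak g=2$. Then $\aut(M)^{\circ}$ is a connected two-dimensional Lie group containing the circle $U(1)$; since the non-abelian two-dimensional Lie group contains no nontrivial compact subgroup, $\aut(M)^{\circ}$ is abelian, and there is $Y\in\mathfrak g$ linearly independent from $T$ with $[T,Y]=0$. Then $Y$ is $U(1)$-invariant, hence descends to a translation-invariant vector field $\widetilde Y$ on $X$; since a vertical infinitesimal CR automorphism of a circle bundle is a constant rotation, $\widetilde Y\neq 0$, and the CR-invariance of the construction forces $u$ to be invariant under the flow of $\widetilde Y$. If that flow has dense orbits, $u$ is constant, $M$ is the spherical Heisenberg nilmanifold and is therefore umbilical everywhere — a contradiction; otherwise $\widetilde Y$ generates a circle action on $X$, the closure of $\langle e^{\tau T},e^{\tau Y}\rangle$ is a $T^{2}$ acting on $M$ by CR automorphisms, and in a flat coordinate $z=s+it$ on $X$ with $\widetilde Y=\partial_s$ we obtain $u=u(t)$, a positive periodic function of a single variable. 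Thus the CR structure is of cohomogeneity one and $Q$ descends to a function $Q=Q(t)$ on a circle. (The index formula \eqref{PHformula0} reads $\sum_k\iota(O_k)=\chi(X)=0$ and is vacuous here, which is exactly why an extra symmetry is needed.)

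It remains to show $Q(t)=0$ for some $t$. The reduced data automatically has the reflection $s\mapsto -s$ as a symmetry, and I expect this to force $Q(t)$ to be real-valued; equivalently, the formula for $Q$ on a circle bundle worked out in Section~\ref{prelim}, specialized to $u=u(t)$, should reduce to a real ODE functional of $u$. I further expect that, weighted by the natural density along the $t$-circle, $Q(t)$ is a total $t$-derivative of a periodic quantity, so that its integral over the circle vanishes; a real periodic function with vanishing weighted integral must vanish somewhere, producing an umbilical orbit, a contradiction. Finally, if $\dim_{\bR}\mathfrak g=3$ the injectivity of all evaluation maps shows $M$ is CR-homogeneous, hence by Cartan's classification \cite{Cartan33} $M$ is either spherical — and thus umbilical everywhere — or one of $\mu_\alpha$, $\mu_\alpha^{(2)}$, $\mu_\alpha^{(4)}$; but $\mu_\alpha^{(4)}\simeq S^{3}$ and $\mu_\alpha$, $\mu_\alpha^{(2)}$ are finite quotients of it, all having finite fundamental group, whereas a unit circle bundle over a torus has infinite fundamental group — so none of these is such a bundle. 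In every case we reach a contradiction, which proves the theorem. The main obstacle is the penultimate step: extracting from the rather involved expression for Cartan's invariant on a circle bundle the two facts special to the cohomogeneity-one reduction — that $Q(t)$ is real and that it integrates to zero against the natural density — while everything else follows from Theorem~\ref{Main1}, Chern--Moser rigidity, and elementary Lie theory.
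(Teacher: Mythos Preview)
Your overall strategy matches the paper's closely: reduce to the torus via Theorem~\ref{Main1}, handle $\dim\mathfrak g\ge 3$ by Cartan's classification plus a fundamental-group obstruction, and in the $\dim\mathfrak g=2$ case use abelianness to descend the second symmetry to a constant vector field $\widetilde Y$ on the torus, reducing to a cohomogeneity-one problem. Your Lie-theoretic shortcuts (the stabilizer bound from \cite{CM74} to get $\dim\mathfrak g\le 3$, and the observation that the non-abelian two-dimensional group contains no circle) are clean alternatives to the paper's route through compactness of $\aut_0(M)$ via \cite{Lee96} and unimodularity.

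There are, however, two genuine gaps. The larger one is precisely the step you flag as the ``main obstacle'': you only \emph{expect} that $Q(t)$ is real and has vanishing weighted integral, and you do not verify either claim. The paper closes this gap with a single structural observation you are missing, the divergence form
\[
e^{-2u}Pu \;=\; D\!\left(e^{-u}D\bigl(e^{-u}D\bar D u\bigr)\right)
\]
(equation~\eqref{divP}). When $u=u(t)$ this collapses, up to a constant, to $\psi'(t)$ with $\psi=e^{-2u}(u'''-u'u'')$ real and periodic; hence $Pu$ vanishes at every extremum of $\psi$. This simultaneously confirms both of your expectations and finishes the argument in one line (this is the content of Proposition~\ref{notorus}). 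The smaller gap is your assertion that ``CR-invariance of the construction forces $u$ to be invariant under the flow of $\widetilde Y$''. A CR automorphism of $M$ need not preserve the particular contact form $\theta_0$, only its conformal class, so it is not automatic that it preserves $u=\log(-D\bar D\log h)$; the paper proves $Yu=0$ by writing the one-parameter family of CR automorphisms in local holomorphic coordinates, using the commutation with $U(1)$ to pin down its form, and differentiating (see the proof of Theorem~\ref{torusthm}).
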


Let us conclude this introduction by discussing the standing assumption in this paper that the compact CR manifold $M$ has a $U(1)$-action that is {\it free} and {\it transverse}. As mentioned above, if the CR manifold $M$ has an effective $U(1)$-action with fixed points (i.e., the action is not free), then these fixed points are necessarily umbilical, since the action yields a one-parameter subgroup of the stability group at such points. Consequently, in terms of proving existence of umbilical points, such existence follows from the existence of fixed points. Of course, it would be of interest to know if there must be other umbilical points as well, in addition to the fixed points.

In contrast to the condition of freeness, the condition that the $U(1)$-action is transverse is more subtle, as is indicated by the following example; the addition of which was inspired by a conversation with H. Jacobowitz \cite{Jacobowitz15}:

\begin{example}\label{su(2)} The family, parametrized by $\alpha>1$, of $4\!:\! 1$ covers $\mu_\alpha^{(4)}$ of Cartan's homogeneous examples \eqref{mualph} of compact, three dimensional CR manifolds without umbilical points can be described as a family of non-standard CR structures on the unit 3-sphere $S^3\subset \bC^2$, defined by their covering maps $\mu_\alpha^{(4)}\to \mu_\alpha$ (see, e.g., \cite{Isaev06}; cf. also \cite{Jacobowitz15}). For every $\alpha>1$, the standard action of $SU(2)$ on $S^3\subset \bC^2$ is a free action on $\mu_\alpha^{(4)}$ by CR automorphisms. If we identify $U(1)$ as a subgroup in $SU(2)$ via
$$
e^{it}\mapsto
\begin{pmatrix} e^{it} & 0 \\ 0 & e^{-it}
\end{pmatrix},
$$
then we obtain a free CR $U(1)$-action on $\mu_\alpha^{(4)}$. However, this action is {\it not transverse} to the CR tangent spaces of $\mu_\alpha^{(4)}$ along a 2-torus in $S^3$. The quotient $M/U(1)$ is still a compact Riemann surface, topologically equivalent to the 2-sphere. Thus, since $\mu_\alpha^{(4)}$ is non-umbilical at every point, we conclude that Theorem \ref{Main1} does not hold without the assumption that the free $U(1)$-action is transverse. In addition, since $\dim_\bR \aut(\mu_\alpha^{(4)})\geq 2$, we also note that the same is true of Theorem \ref{Maintorus}.
\end{example}

This paper is organized as follows. In Section \ref{prelim}, we introduce Cartan's tensor $\bar Q$ in the context of unit circle bundles over Riemann surfaces. In Section \ref{PHsection}, we state and prove the index formula (Theorem \ref{PHthm}) for umbilical circles alluded to above. In Section \ref{CR-classical}, we discuss CR umbilical points on circle bundles in the context of eigenvector field foliations of Hessians on Riemann surfaces, with a separate discussion of the situation when the Riemann surface is a torus in Section \ref{Torus}. The proofs of the results stated in this introduction are then given in Section \ref{Proofs}.

\section{Preliminaries}\label{prelim}

\subsection{Strictly pseudoconvex unit circle bundles} As mentioned in the introduction, any compact strictly pseudoconvex, three dimensional CR manifold $M$ can be realized as the unit circle in a positive holomorphic line bundle over a compact Riemann surface $X$, in view of a construction due to Epstein \cite{Epstein92}. The reader is referred to this paper (especially its appendix A) for that construction. For the remainder of this paper, we shall consider only such unit circle bundles. We note here that for the considerations in this paper (existence of umbilical points) the orientation of the CR manifold $M$ is not important, and we could work equally well in the dual, negative line bundle.

Thus, let $X$ be a Riemann surface (complex manifold of dimension one) and $\pi\colon L\to M$ a positive holomorphic line bundle over $X$. We shall choose a positively curved metric $(\cdot,\cdot)$ in $L$ and let $M$ denote the unit circle bundle in $L$ with respect to this metric, i.e., $M=\{(x,\ell)\in L\colon |\ell|^2_x=1\}$. If $s_0\colon U\subset X\to L$ is a nonvanishing local holomorphic section, then in the induced local trivialization $L|_U\cong U\times\bC$ with coordinates $(z,\tau)\in U\times \bC$, the three dimensional CR manifold $M$ is given by
\begin{equation}\label{Meq}
|\tau|^2h(z,\bar z)=1,
\end{equation}
where $h(z,\bar z)=|s_0|_z^2$. The assumption that the curvature is positive,
\begin{equation}\label{Lcurv}
i \Theta:=-i \partial\bar\partial \log h>0,
\end{equation}
means that $M$ is strictly pseudoconvex. (We mention again that the orientation of $M$ is irrelevant in our context; negative curvature would work equally well.)  If we use polar coordinates $\tau=re^{it}$ in the fibers and $(z,t)\in \bC\times \bR$ as local coordinates on $M$, then
\begin{equation}\label{thetahat}
\hat \theta=-dt+\frac{i}{2}(\partial \log h-\bar\partial \log h)
\end{equation}
is a contact form on $M$ that is compatible with the CR structure, and
\begin{equation}
d\hat \theta=\frac{i}{2}(\bar\partial \partial \log h-\partial\bar \partial \log h)=-i\partial\bar \partial \log h.
\end{equation}
We shall use the notation
\begin{equation}\label{Dnotation}
D:=\frac{\partial}{\partial z},\quad \Delta:=4D\bar D,
\end{equation}
so that
$$
d\hat\theta=-iD\bar D\log h\,  dz\wedge d\bar z=ia^{-1}\,  dz\wedge d \bar z
$$
where $a=a(z,\bar z)$ is the function
\begin{equation}\label{a}
a:=(-D\bar D\log h)^{-1}=\left(-\frac{1}{4}\Delta \log h\right)^{-1}>0.
\end{equation}
Thus, with
\begin{equation}\label{theta0}
\theta_0:=a\hat\theta,
\end{equation}
we have
\begin{equation}\label{dtheta_0}
d\theta_0=i\, dz\wedge d\bar z +\frac{da}{a}\wedge \theta_0.
\end{equation}
The coframe $(\theta_0,dz,d\bar z)$ defines the CR structure on $M$ in the standard way and any other CR coframe $(\theta,\theta^1,\theta^{\bar 1})$ is of the form
\begin{equation}\label{CRcoframes}
\theta=|\lambda|^2\theta_0,\quad \theta^1=\lambda(dz+\mu\theta_0).
\end{equation}
The 1-forms $(\omega,\omega^1,\omega^{\bar 1},\phi)=(\theta_0,dz,d\bar z,da/a)$ yield a section of the Cartan-Chern-Moser (CCM) CR bundle \cite{CM74} that can be used to pullback Cartan's ``6th order'' invariant $\bar Q=Q^1{}_{\bar 1}$. We shall in fact prefer to work with the complex conjugated invariant $Q$. The direct computation in \cite{JacobowitzBook} in a coframe $(\theta,\theta^1,\theta^{\bar 1})$ given by \eqref{CRcoframes} shows that (see pp. 126 and 140 in \cite{JacobowitzBook}; but mind the complex conjugation)
\begin{equation}\label{Q}
Q=-\frac{r}{\lambda^3{\bar\lambda}},
\end{equation}
where $r={r(z,\bar z)}$ is a function explicitly computed from the function
\begin{equation}\label{barb}
\begin{aligned}
q &=-Da/a=-D\log a\\
& = D\log\, (-D\bar D\log h).
\end{aligned}
\end{equation}
In fact, $r$ is obtained by applying a third order differential operator (PDO) to $q$ (see \cite{JacobowitzBook}, eq. (47) on p. 126):
\begin{equation}\label{r}
r=D^2\bar D q-3q D\bar D q +2q^2Dq -Dq\bar Dq.
\end{equation}
Recall that {\it umbilical points} on $M$ are defined to be those where $Q=0$, or equivalently $r=0$. The expression \eqref{r} for identifying the umbilical locus on a rigid hypersurface (i.e., one with an infinitesimal transverse symmetry) in $\bC^2$ was also derived in \cite{Loboda97} by using the extrinsic normal form of Chern--Moser \cite{CM74}; see also below. We observe that the function $r=r(z,\bar z)$ is independent of the fiber variable $t$, and therefore the umbilical locus of $M$ will consist of circles $\pi^{-1}(z_0)$ for $z_0\in X$ at which $r=0$ (where by a slight abuse of notation we also denote by $\pi=\pi|_M$, the restriction of $\pi$ to $M$). We say that $M$ has an {\it isolated umbilical circle} at $z_0\in X$ if $r$ has an isolated zero at $z_0$; we shall refer to $z_0$ as the base of the umbilical circle $\pi^{-1}(z_0)$. Conversely, we shall say that $M$ is {\it totally umbilical} on an open subset $U\subset M$ if $r$ vanishes identically on $\pi(U)$. It is well known \cite{CM74} that $M$ is totally umbilical on $U$ if and only if $M$ is locally spherical on $U$, i.e., $M$ is locally CR diffeomorphic to a standard sphere near every point in $U$.

We shall digress momentarily to discuss umbilical points locally on an embeddable (here, real-analytic for simplicity) three dimensional CR manifold from an extrinsic point of view.

\subsection{Umbilical points and normal forms}\label{extrinsic} A real-analytic strictly pseudoconvex hypersurface $M$ through the origin in $\bC^2$ with coordinates $(z,w)$ can be expressed locally by a defining equation of the form
\begin{equation}\label{NormForm}
\im w=F(z,\bar z,\re w)=|z|^2+\sum_{k,l\geq 2} a_{k l}(\re z)z^k\bar z^l,\quad a_{kl}=\overline{a_{l k}}.
\end{equation}
It is shown in the seminal paper by Chern and Moser \cite{CM74} that such an equation can be brought to a normal form where $a_{22}(u)=a_{32}(u)=a_{33}(u)\equiv 0$. In this case, the coefficient $a_{42}(0)$ represents (the complex conjugate of) Cartan's tensor $Q$ at $0$. If $M$ has an infinitesimal CR automorphism transverse to $T^{1,0}_pM$ near $p$ (such as, e.g., a $U(1)$-action as considered in this paper), then $M$ is called {\it rigid} and there are local coordinates $(z,w)$, vanishing at $p$, such that $M$ is locally defined by \eqref{NormForm} with $F(z,\bar z,u)=F(z,\bar z)$ independent of $u=\re w$. It may still be that in Chern-Moser normal form, the so normalized $F$ depends on $u=\re w$, but as explained in \cite{Loboda97}, one may still achieve a rigid normalization (in which we need not have the Chern-Moser normalization $a_{22}=a_{32}=a_{33}=0$)
\begin{equation}\label{NormForm1}
\im w=F(z,\bar z)=|z|^2+\sum_{k,l\geq 2} a_{k l}z^k\bar z^l,\quad a_{kl}=\overline{a_{l k}}
\end{equation}
such that $a_{42}$ represents $Q$ at $p=0$. Moreover, it is shown that if we set $$
q=q(z,\bar z):=\frac{F_{zz\bar z}}{F_{z\bar z}},
$$
then $Q=Q(z,\bar z)$ is represented by $r=r(z,\bar z)$, given by the differential operator in \eqref{r}, and hence the umbilical points are given by the vanishing of this expression. Since $r$ is independent of $u=\re w$, the umbilical points appear locally as unions of lines (orbits of the infinitesimal CR automorphism) $u\mapsto (z_0,u)$, where $r(z_0,\bar z_0)=0$. If $z_0=0$ is an isolated zero of $r(z,\bar z)$, then the line $\ell_0\subset M$ parametrized by $u\mapsto (0,u)$ is umbilical and one can define the index of this umbilical line as
\begin{equation}\label{index0}
\iota(\ell_0):=-\frac 12\, \deg_0\left(\frac{r}{|r|}\right),
\end{equation}
where $\deg_0$ denotes the topological degree of the mapping $r/|r|$ from a small circle centered at $z=0$ (so small that $z=0$ is the only zero of $r$ inside) to the unit circle. An equivalent definition of the index will be given in the next section in the context of a circle bundle over a Riemann surface, where the reason for the factor 1/2 becomes clear. If $z=0$ is an isolated zero of $r(z,\bar z)$, then it is easy to see that
\begin{equation}\label{indexint}
-\deg_0\left(\frac{r}{|r|}\right)=\frac{i}{2\pi }\int_{\partial D_\epsilon(0)} d\log r,
\end{equation}
where $D_\epsilon(0)$ is the disk $\{|z|<\epsilon\}$; $z=0$ is the only zero of $r$ in $D_\epsilon(0)$; a continuous branch of the logarithm is chosen along the boundary $\partial D_\epsilon(0)$; and the integral is taken with respect to the positive orientation of the boundary. From this we deduce that if $\ell_0$ is an isolated umbilical line with non-zero index, then any sufficiently small perturbation of $r$ will still have zeros in $D_\epsilon(0)$, and therefore $M$ will have umbilical lines close to $\ell_0$. It is also easy to see that if $r(z_0,\bar z_0)=0$, then this is an isolated zero with index $\iota(\ell_0)=-1/2$ if $|r_z|>|r_{\bar z}|$ at $z_0$ and $\iota(\ell_0)=1/2$ if the reverse inequality $|r_z|<|r_{\bar z}|$ holds. We conclude that if $r$ has an isolated zero at $z=0$, with a non-zero index $\iota(\ell_0)$, then a sufficiently small, generic perturbation of $r$ will have isolated zeros of indices $\pm 1/2$ in $D_\epsilon(0)$ adding up to $\iota(\ell_0)$. Further discussion of the local theory can be found, in a different context, in Section \ref{Loewner} below.

We end this digression into the extrinsic point of view to note that there is a $1\!:\!1$ correspondence between CR manifolds that are locally a unit circle bundle over a Riemann surface, given by the equation \eqref{Meq}, and those that can be given by a rigid local equation of the form \eqref{NormForm1}, namely via $\tau=e^{-iw/2}$ and $F=-\log h$ (possibly with some further normalization to get $F(z,\bar z)=|z|^2+O(|z|^4)$).

\section{An index formula for isolated umbilical circles on compact Riemann surfaces}\label{PHsection}

If we examine the transformation rule \eqref{Q} for the Cartan tensor $Q$ as we vary the CR coframe \eqref{CRcoframes}, we notice that there is a choice of a smooth, non-vanishing function $\lambda$ that makes $Q$ identically one near a {\it non-umbilical} point $p\in M$. Such a choice of $\lambda$ yields a contact form $\theta=|\lambda|^2\theta_0$ that satisfies
\begin{equation}\label{dtheta-1}
\begin{aligned}
d\theta &=|\lambda|^2 d\theta_0+\left(\frac{d\lambda}{\lambda}+\frac{d\bar \lambda}{\bar \lambda}\right)\wedge \theta\\
&=
i|\lambda|^2 dz\wedge d\bar z+\left(d\log a+d\log|\lambda|^2\right)\wedge \theta\\
&=i(\lambda dz)\wedge \overline{\lambda dz}+d\log a|\lambda|^2\wedge \theta.
\end{aligned}
\end{equation}
Thus, with
\begin{equation}\label{theta1}
\theta^1=\lambda (dz+i\bar D\log a|\lambda|^2 \theta_0),
\end{equation}
we have
$$
d\theta=i\theta^1\wedge \theta^{\bar 1},
$$
and the computations in \cite{JacobowitzBook} show that the pullback of the Cartan invariant $Q$ using the CCM section of 1-forms $(\theta,\theta^1,\theta^{\bar 1},\phi=0))$ satisfies $Q=1$. We note that $(\theta^1,\theta^{\bar 1})$ is an admissible coframe for the pseudohermitian structure (\cite{Webster78}) defined on $M$ by $\theta$.
We further observe that the function $\lambda=\lambda(z,\bar z)$ satisfies
\begin{equation}\label{lambda}
\bar \lambda\lambda^3=|\lambda|^2\lambda^2=-r(z,\bar z),
\end{equation}
which implies that $\lambda$ is unique up to parity. Since $\lambda=\lambda(z,\bar z)$ is independent of the fiber variable $t$, we may also define a $(1,0)$-form near $z_0=\pi(p)$ on $U\subset X$ by $\xi^1:=\lambda dz$. This form is unique up to parity. By allowing $\lambda$ to vanish when $r$ does (i.e., at the base of umbilical circles), the equation \eqref{lambda} defines $\lambda$ as a possibly non-smooth function with square root branching where $r$ vanishes, and consequently we can extend $\xi^1$ as a double-valued $(1,0)$-form that vanishes when $\lambda$ does. We obtain in this way a well-defined global section $\Xi:=\xi^1\otimes\xi^1$ of the square of the canonical bundle $\kappa_X\otimes \kappa_X$. The section $\Xi$ vanishes precisely at the points $z_0$ where $r(z_0,\bar z_0)=0$, i.e., at the base of the umbilical circles on $M$.

If $M$ has an isolated umbilical circle at $z_0\in X$, we shall define the {\it index of the umbilical circle} $\pi^{-1}(z_0)$ to be
\begin{equation}\label{index}
\iota(z_0):=-\frac{1}{2}\deg_{z_0}\left (\frac{\alpha}{|\alpha|}\right),
\end{equation}
where $\xi^{1}\otimes\xi^{1}=\alpha\, d\zeta\otimes d\zeta$ in some local coordinate $\zeta$ near $z_0$ and $\deg_{z_0}$ refers to the topological degree of $\alpha/|\alpha|$ as a map of a small circle $S^1_\epsilon(z_0):=\{|z-z_0|=\epsilon\}$ to the unit circle $S^1$; the index is easily seen to be independent of the local coordinate used. In the coordinates $z$ used above, we have of course $\alpha=\lambda^2$, and the index counts the number of times $\lambda/|\lambda|$ wraps around $S^1$ (in the negative direction) as $S^1_\epsilon(z_0)$ is traversed (in the positive direction), which could be a half-integer. We have the following ``Poincar\'e-Hopf type" index formula for isolated umbilical circles on a compact circle bundle:

\begin{theorem}\label{PHthm}
Let $X$ be a compact Riemann surface, $\pi\colon  L\to X$ a holomorphic line bundle with a positively curved metric, and $M$ the corresponding, strictly pseudoconvex unit circle bundle in $L$. If $M$ has only isolated circles of umbilical points, based at $z_1,\ldots,z_n\in X$, then
\begin{equation}\label{PHformula}
\sum_{k=1}^n \iota(z_k)=\chi(X),
\end{equation}
where $\chi(X)=2-2g$ denotes the Euler characteristic of $X$ and $\iota(z_k)$ the index of the umbilical circle at $z_k$.
\end{theorem}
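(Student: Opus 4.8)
The plan is to interpret the index formula as a statement about the vanishing order of a global section of a line bundle on $X$, and then apply the classical fact that for any global meromorphic (or smooth with isolated zeros) section $\Xi$ of a holomorphic line bundle $E\to X$, the sum of the local vanishing orders equals $\deg E$. The section we have already constructed is $\Xi=\xi^1\otimes\xi^1$, a global section of $\kappa_X\otimes\kappa_X=\kappa_X^{\otimes 2}$, which vanishes exactly at the bases $z_1,\dots,z_n$ of the umbilical circles. Thus the first step is to recall from the excerpt that $\Xi$ is a genuine, globally well-defined (single-valued, though possibly only continuous) section of $\kappa_X^{\otimes 2}$ with isolated zeros, even though $\xi^1=\lambda\,dz$ is only double-valued; the squaring kills the sign ambiguity in $\lambda$.

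The second step is to relate the index $\iota(z_k)$ defined in \eqref{index} to the local "winding number" of $\Xi$ around $z_k$. By definition, writing $\Xi=\alpha\,d\zeta\otimes d\zeta$ in a local holomorphic coordinate $\zeta$, we have $\iota(z_k)=-\tfrac12\deg_{z_k}(\alpha/|\alpha|)$. The quantity $\deg_{z_k}(\alpha/|\alpha|)=\frac{1}{2\pi i}\int_{S^1_\epsilon(z_k)}d\log\alpha$ is (minus) the standard local contribution to the degree of a section; summing over $k$ and combining the local pieces along a common global reference — e.g. a meromorphic section of $\kappa_X^{\otimes 2}$, or a partition-of-unity argument bounding the Chern form — yields $\sum_k \deg_{z_k}(\alpha/|\alpha|)=-\deg(\kappa_X^{\otimes 2})$ up to sign conventions. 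Concretely, the cleanest route is: fix any nonzero meromorphic section $\sigma$ of $\kappa_X^{\otimes 2}$; then $f:=\Xi/\sigma$ is a global continuous function on $X$ with isolated zeros and poles, and $\sum(\text{zeros of }f)-\sum(\text{poles of }f)=0$ by the argument principle on the closed surface $X$; rearranging gives $\sum_k \iota(z_k) = \deg(\kappa_X)$ after tracking the factors of $2$. Since $\deg(\kappa_X)=2g-2=-\chi(X)$, a sign check is needed, and the answer must come out to $+\chi(X)$; the factor $-\tfrac12$ in \eqref{index} together with the square in $\Xi=\xi^1\otimes\xi^1$ are precisely what convert $\deg(\kappa_X^{\otimes 2})=2(2g-2)$ into $\chi(X)=2-2g$, i.e. $\sum_k \iota(z_k)=-\tfrac12\deg(\kappa_X^{\otimes 2})=-(2g-2)=\chi(X)$.

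The main obstacle I expect is not conceptual but bookkeeping: verifying that $\Xi$ really is a well-defined global $C^0$ section of $\kappa_X^{\otimes 2}$ (not merely locally defined), that it has only isolated zeros under the hypothesis that the umbilical circles are isolated, and — most importantly — getting every sign and factor of $2$ consistent, since the definition of $\iota$ carries a $-\tfrac12$, the object lives in $\kappa_X^{\otimes 2}$ rather than $\kappa_X$, and the orientation of $X$ and of the small circles $S^1_\epsilon(z_k)$ must match the convention in \eqref{indexint}. A secondary technical point is that $\lambda$, hence $\xi^1$, is only continuous (with square-root branching) across the umbilical bases, so one should phrase the "degree of a section" statement for continuous sections of a smooth complex line bundle with isolated zeros — this is standard (it is the obstruction-theoretic / Poincaré–Hopf statement that the sum of local indices equals the first Chern number $c_1(\kappa_X^{\otimes 2})[X]=\langle c_1(\kappa_X^{\otimes 2}),[X]\rangle=2(2g-2)$), and no holomorphicity of $\Xi$ is needed. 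Once these are pinned down, the formula \eqref{PHformula} follows immediately.
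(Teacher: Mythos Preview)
Your proposal is correct and follows essentially the same approach as the paper: both recognize that $\Xi=\xi^1\otimes\xi^1$ is a global (continuous) section of $\kappa_X^{\otimes 2}$ whose isolated zeros are exactly the umbilical bases, and that the sum of the local winding numbers of such a section equals $c_1(\kappa_X^{\otimes 2})[X]=2(2g-2)$, from which the factor $-\tfrac12$ in the definition of $\iota$ yields $\chi(X)$. The only difference is that the paper spells this Poincar\'e--Hopf/Chern--Weil step out explicitly---integrating the curvature form of $\kappa_X^{\otimes 2}$ over $X_\epsilon$, applying Stokes, and computing the boundary residues via the change-of-frame formula $\theta_\Xi=\theta_0+d\log\lambda^2$---whereas you invoke it as a known black box.
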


\begin{proof}
Let $g$ be a smooth metric on $X$, and let $\Theta$ denote the curvature form of this metric. By the Gauss--Bonnet formula, we have
\begin{equation}\label{GBformula}
\chi(X)=\frac{i}{2\pi}\int_X\Theta.
\end{equation}
The metric $g$ induces a hermitian metric on the square of the canonical bundle $\kappa^2:=\kappa_X\otimes \kappa_X$, whose curvature form is $\Theta_{\kappa^2}=-2\Theta$. Let $D_\epsilon(z_k)$ denote small disks of radius $\epsilon>0$ centered at the bases $z_k$ of the umbilical circles and note, by \eqref{GBformula} and continuity of $\Theta_{\kappa^2}$, that
\begin{equation}\label{GBformula2}
-2\chi(X)=\frac{i}{2\pi}\int_X\Theta_{\kappa^2}=\lim_{\epsilon\to 0} \frac{i}{2\pi}\int_{X_\epsilon}\Theta_{\kappa^2}.
\end{equation}
where $X_\epsilon:=X\setminus \bigcup_{k=1}^n D_{\epsilon}(z_k)$. Let us choose $\epsilon$ so small that the disks $D_{\epsilon}(z_k)$ are disjoint and each closed disk is contained in an open set $U\subset X$ with a local trivialization $L|_U\cong U\times\bC$ as in Section \ref{prelim} above. Recall that $\Xi:=\xi^1\otimes\xi^1$  is a global, non-vanishing section of $\kappa^2$ over $X_\epsilon$, and let $\theta_\Xi$ denote the metric connection form with respect to this basis over $X_\epsilon$. Then, since $\Theta_{\kappa^2}=d\theta_\Xi$, we have
\begin{equation}\label{e1}
\frac{i}{2\pi}\int_{X_\epsilon}\Theta_{\kappa^2}=-\sum_{k=1}^n\frac{i}{2\pi}\int_{\partial D_\epsilon(z_k)}\theta_{\Xi},
\end{equation}
where the path integrals $\int_{\partial D_\epsilon(z_k)}$ are taken in the positive direction.
Pick a $k$ and recall that $\overline{D_\epsilon(z_k)}$ is contained in an open set $U$ on which we have a local coordinate $z$ and a local trivialization $L|_U\cong U\times\bC$. Thus, in $U$ we have $\Xi=\lambda^2 dz\otimes dz$ and if $\theta_0$ denotes the connection form of $\kappa^2$ with respect to $dz\otimes dz$, then by the standard change of basis formula for the connection in a line bundle we have
\begin{equation}
\theta_{\Xi}=\theta_0+d\log \lambda^2.
\end{equation}
We conclude that
\begin{equation}\label{e2}
\begin{aligned}
\frac{i}{2\pi}\int_{\partial D_\epsilon(z_k)}\theta_{\Xi} &=\frac{i}{2\pi}\int_{\partial D_\epsilon(z_k)}\theta_{0}+\frac{i}{2\pi}\int_{\partial D_\epsilon(z_k)}d\log \lambda^2\\
&=\frac{i}{2\pi}\int_{D_\epsilon(z_k)}\Theta_{\kappa^2}+\frac{i}{2\pi}\int_{\partial D_\epsilon(z_k)}d\log \lambda^2,
\end{aligned}
\end{equation}
since $dz\otimes dz$ is a non-vanishing, smooth section of $\kappa^2$ in $U$, and hence, $\Theta_{\kappa^2}=d\theta_0$ in $U$. It is also readily seen that when $z_0$ is the only zero of $\lambda^2$ in $\overline{D_\epsilon(z_k)}$, then
\begin{equation}\label{e3}
\frac{i}{2\pi}\int_{\partial D_\epsilon(z_k)}d\log \lambda^2=-\deg_{z_k}\left(\frac{\lambda^2}{|\lambda^2|}\right)=2\iota(z_k).
\end{equation}
By substituting \eqref{e3} and \eqref{e2} into \eqref{e1} and using that fact that, by continuity,
$$
\lim_{\epsilon \to 0} \int_{D_\epsilon(z_k)}\Theta_{\kappa^2}=0,
$$
we conclude that
\begin{equation}
\frac{i}{2\pi}\int_{X_\epsilon}\Theta_{\kappa^2}=\sum_{k=1}^n -2\iota(z_k),
\end{equation}
which completes the proof of the theorem in view of \eqref{GBformula2}.
\end{proof}

\section{CR umbilics on circle bundles as umbilics of the Hessian of the Gauss curvature}\label{CR-classical}

In this section, we shall relate the (CR) umbilical locus on the unit circle bundle $M$ in the holomorphic line bundle $\pi\colon L\to X$ with positively curved metric $h=(\cdot,\cdot)$ to umbilical points of the Hessian of the Gauss curvature $K$ of the induced metric with K\"ahler form $i\Theta=-i\partial\bar\partial \log h$ on the Riemann surface $X$. Recall that umbilical points on $M$ are defined to be those where Cartan's tensor $Q$ vanishes, or equivalently $r=0$, where $r=r(z,\bar z)$ is given by \eqref{r}. The fact that $r$ is independent of the fiber variable $t$ means, as mentioned above, that the umbilical locus of $M$ will consist of circles $\pi^{-1}(z_0)$ for $z_0\in X$ at which $r=0$. It will actually be convenient to express $r$ as a fourth order partial differential operator $P$ applied to the function $u=-\log a$,
\begin{equation}\label{r2}
r=Pu:=D^3\bar D u-3(Du) D^2\bar D u +2(Du)^2D^2u -(D^2u)D\bar D u,
\end{equation}
where we also recall $a=(-D\bar D\log h)^{-1}$ so that in fact $u=\log (-D\bar D\log h)$. Our first observation is that $r$ is essentially a second covariant derivative of the (Gauss) curvature $K=K^u$ of the metric
\begin{equation}\label{ds}
ds^2:=e^{2\phi}|dz|^2,\quad 2\phi:=u,
\end{equation}
where as usual we have
\begin{equation}\label{K}
K:=-e^{-2\phi}\Delta\phi=-2e^{-u}D\bar Du.
\end{equation}
For a smooth, real-valued function $f$, we shall denote by $f_{;z}$ and $f_{;zz}$ the first and second order covariant derivatives with respect to $z$ (in the $(1,0)$ direction) in the unitary coframe $e^\phi dz$; i.e.,
\begin{equation}\label{Kzz}
f_{;z}=e^{-\phi}Df,\quad f_{;zz}=e^{-2\phi}(D^2f-2(D\phi)Df)=e^{-u}(D^2f-(Du)Df).
\end{equation}
A straightforward computation using \eqref{K} and \eqref{Kzz} proves:

\begin{proposition}\label{KzzProp} If $K=K^u$ denotes the curvature of the metric given by \eqref{ds} and $P$ the fourth order PDO yielding the Cartan invariant $Q$ via \eqref{Q} and \eqref{r2}, then
\begin{equation}\label{Q=Kzz}
Pu=-\frac{e^{2u}}{2}K_{;zz}.
\end{equation}
\end{proposition}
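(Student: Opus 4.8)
The statement is an identity between two fourth-order differential polynomials in $u$, so the natural route is a direct computation: expand both sides in terms of $Du$, $D^2u$, $D\bar Du$, $D^2\bar Du$, $D^3\bar Du$ and check that they coincide. The plan is to organize the right-hand side so that the bookkeeping stays under control. From \eqref{K} one has $K=-2e^{-u}\,D\bar Du$, and the product rule gives the operator identity $D\circ e^{-u}=e^{-u}\circ\bigl(D-(Du)\bigr)$ on functions. Since \eqref{Kzz} says precisely that the second covariant derivative in the unitary coframe is $f_{;zz}=e^{-u}\bigl(D-(Du)\bigr)(Df)$, one can peel off the exponential factors one derivative at a time and obtain
\begin{equation*}
K_{;zz}=e^{-u}\bigl(D-(Du)\bigr)(DK)=-2\,e^{-2u}\,\bigl(D-2(Du)\bigr)\bigl(D-(Du)\bigr)(D\bar Du),
\end{equation*}
where each time a first-order operator is commuted past a factor $e^{-u}$ its ``potential'' picks up one more copy of $Du$. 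Multiplying by $-\tfrac12e^{2u}$ and expanding the two nested first-order operators by the Leibniz rule produces a fourth-order polynomial in $u$, which I would then compare, term by term, with $Pu$ as given by \eqref{r2} — equivalently, with \eqref{r} after substituting $q=Du$, so that $\bar Dq=D\bar Du$, $D\bar Dq=D^2\bar Du$, and $D^2\bar Dq=D^3\bar Du$.

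The one delicate point — and essentially the only place where an error is likely to creep in — is the Christoffel contribution to the second covariant derivative. For the conformal metric \eqref{ds} with $2\phi=u$ the Levi-Civita connection coefficient in the $z$-direction is $\Gamma^z_{zz}=2D\phi=Du$, which is exactly the $-(Du)\,Df$ term appearing in \eqref{Kzz}; iterating this correction is what generates all the nonlinear terms of the operator $P$, and dropping it or mis-weighting it would alter their coefficients and break the identity. A secondary, purely mechanical point is to keep the exponential weights straight: $K$ comes with a factor $e^{-u}$ and each covariant derivative contributes another $e^{-u}$, so $K_{;zz}$ carries a factor $e^{-2u}$ and the weight-zero quantity $e^{2u}K_{;zz}$ is the one that can match $Pu$; this is also where the factor $e^{2u}$ in the statement comes from.

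Apart from this the argument needs no geometric input — it is the ``straightforward computation'' the text alludes to — so the remaining work is just the tedious but routine expansion and the term-by-term matching.
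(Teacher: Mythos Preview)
Your plan is exactly the ``straightforward computation using \eqref{K} and \eqref{Kzz}'' that the paper invokes without writing out, and your operator identity $(D-(Du))\circ e^{-u}=e^{-u}\circ(D-2(Du))$ is a tidy way to organize the bookkeeping. One warning for when you actually expand: you will obtain $2(Du)^2\,D\bar Du$ in the third term, not the $2(Du)^2\,D^2u$ printed in \eqref{r2}; this is a typo in \eqref{r}/\eqref{r2} (the divergence form \eqref{divP}, which the paper derives \emph{from} the proposition, confirms that $2q^2\bar Dq$ is the intended term), so do not let the apparent mismatch shake your confidence in the method.
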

Since $K_{;zz}$ can be written $K_{;zz}=D(e^{-u}DK)$, we also obtain the ``divergence form'' of~$P$:
\begin{equation}\label{divP}
Pu=e^{2u}D(e^{-u}D(e^{-u}D\bar Du)).
\end{equation}

\subsection{Umbilical points of Hessians of functions on a Riemann surface} We shall discuss briefly Hessians of smooth functions on a Riemann surface $X$ to offer a slightly different perspective on CR umbilical points on circle bundles $M$ over $X$. Let $ds^2$ be a smooth metric on $X$, given in a local chart $z=x+iy$ by
\begin{equation}\label{metric}
ds^2=e^{2\phi}|dz|^2=e^{2\phi}(dx^2+dy^2).
\end{equation}
Let $\nabla=\nabla^\phi$ denote the metric (Levi-Civita) connection, $\nabla^{\uparrow}$ the gradient operator on functions $f$,
$$
\nabla^{\uparrow}f=e^{-2\phi}\left(f_x\frac{\partial}{\partial x}+f_y\frac{\partial}{\partial y}\right),
$$
where we have used the notation $f_x$ for the partial derivative $\partial f/\partial x$. The Hessian of $f$, with respect to the metric connection, is now defined by
$$
H^\phi_f:=\nabla\nabla^{\uparrow}f,
$$
which can be viewed as a linear operator on the real tangent space $TX\to TX$, or a $(1,1)$-tensor, which can be represented in the unitary coframe $\{dx\otimes\partial/\partial x, dx\otimes\partial/\partial y, dy\otimes\partial/\partial x, dy\otimes\partial/\partial y\}$ as a symmetric $2\times2$ matrix
\begin{equation}\label{Hf}
H_f:=H^\phi_f=\left(
\begin{matrix} f_{11} & f_{12}\\ f_{12} & f_{22}\end{matrix}
\right).
\end{equation}
The umbilical points of the tensor $H_f$ are those where the two eigenvalues of the matrix \eqref{Hf} are equal (where the foliation by eigenvector fields is singular), which coincides with the points where the expression
$$
f_{11}-f_{22}-2if_{12}
$$
vanishes. A straightforward calculation shows that this expression essentially coincides with the second covariant derivative of $f$ in the $z$-direction,
\begin{equation}\label{q}
f_{;zz}=\frac{1}{4}(f_{11}-f_{22}-2if_{12}),
\end{equation}
and hence the set of umbilical points of $H_f$ consists of the locus where $f_{;zz}=0$. We note that
\begin{equation}\label{Xi}
\Xi_f:=f_{;zz} (e^{\phi}dz\otimes e^{\phi}dz)=e^{2\phi}f_{;zz}\, dz\otimes dz,
\end{equation}
defines a global section of the square of the canonical bundle $\kappa_X\otimes \kappa_X$ on $X$. In complete analogy with Section \ref{PHsection}, we may now define the index of an umbilical point $z_0\in X$ of the Hessian $H_f$ to be
\begin{equation}\label{indexf}
\iota_f(z_0):=-\frac{1}{2}\deg_{z_0}\left (\frac{f_{;zz}}{|f_{;zz}|}\right),
\end{equation}
where $\deg_{z_0}$ refers to the topological degree of map from a small circle $S^1_\epsilon(z_0):=\{|z-z_0|=\epsilon\}$ to the unit circle $S^1$. Repeating the proof of Theorem \ref{PHthm}, verbatim, we arrive at the following result, which in this context is most likely known.

\begin{theorem}\label{PHthm2}
Let $X$ be a compact Riemann surface and $f\in C^\infty(X,\bR)$. If the Hessian $H_f$, with respect to any metric on $X$, has only isolated umbilical points at $z_1,\ldots,z_n\in X$, then
\begin{equation}\label{PHformula2}
\sum_{k=1}^n \iota_f(z_k)=\chi(X),
\end{equation}
where $\chi(X)=2-2g$ denotes the Euler characteristic of $X$ and $\iota_f(z_k)$ the index of the umbilical point of $H_f$ at $z_k$.
\end{theorem}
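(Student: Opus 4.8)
The plan is to run, essentially verbatim, the proof of Theorem~\ref{PHthm}, with the global section $\Xi_f$ of $\kappa^2:=\kappa_X\otimes\kappa_X$ from \eqref{Xi} in place of the section $\Xi=\xi^1\otimes\xi^1$ used there. First I would fix the metric $ds^2=e^{2\phi}|dz|^2$ with respect to which $H_f$ is formed and let $\Theta$ be its curvature form, so that Gauss--Bonnet \eqref{GBformula} reads $\chi(X)=\frac{i}{2\pi}\int_X\Theta$. The induced hermitian metric on $\kappa^2$ has curvature form $\Theta_{\kappa^2}=-2\Theta$, and hence $-2\chi(X)=\frac{i}{2\pi}\int_X\Theta_{\kappa^2}=\lim_{\epsilon\to0}\frac{i}{2\pi}\int_{X_\epsilon}\Theta_{\kappa^2}$, where $X_\epsilon:=X\setminus\bigcup_{k=1}^n D_\epsilon(z_k)$ for disjoint coordinate disks $D_\epsilon(z_k)$ about the (finitely many, by hypothesis) umbilical points $z_1,\dots,z_n$.

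Next I would observe, from \eqref{q} and \eqref{Xi}, that $\Xi_f=e^{2\phi}f_{;zz}\,dz\otimes dz$ is a well-defined smooth global section of $\kappa^2$ whose zero set is exactly $\{z_1,\dots,z_n\}$; in particular it is nonvanishing over $X_\epsilon$. Writing $\theta_{\Xi_f}$ for the connection form of $\kappa^2$ in this frame over $X_\epsilon$ and using $\Theta_{\kappa^2}=d\theta_{\Xi_f}$, Stokes' theorem gives $\frac{i}{2\pi}\int_{X_\epsilon}\Theta_{\kappa^2}=-\sum_k\frac{i}{2\pi}\int_{\partial D_\epsilon(z_k)}\theta_{\Xi_f}$, the boundary integrals taken positively. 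On each $D_\epsilon(z_k)$ the local frame $dz\otimes dz$ is smooth and nonvanishing with connection form $\theta_0$, so the change-of-frame formula gives $\theta_{\Xi_f}=\theta_0+d\log(e^{2\phi}f_{;zz})$; since $e^{2\phi}$ is smooth and positive, $\frac{i}{2\pi}\int_{\partial D_\epsilon(z_k)}d\log(e^{2\phi}f_{;zz})=\frac{i}{2\pi}\int_{\partial D_\epsilon(z_k)}d\log f_{;zz}=-\deg_{z_k}\!\big(f_{;zz}/|f_{;zz}|\big)=2\iota_f(z_k)$ by \eqref{indexf}, whereas $\frac{i}{2\pi}\int_{\partial D_\epsilon(z_k)}\theta_0=\frac{i}{2\pi}\int_{D_\epsilon(z_k)}\Theta_{\kappa^2}\to0$ as $\epsilon\to0$ by continuity of $\Theta_{\kappa^2}$. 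Combining these, $-2\chi(X)=\sum_{k=1}^n(-2\iota_f(z_k))$, which is \eqref{PHformula2}.

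Conceptually this is nothing more than the fact that the total signed zero count of a smooth section of a complex line bundle over a compact surface equals its first Chern number, here $c_1(\kappa^2)=2c_1(\kappa_X)=-2\chi(X)$; the factor $-\tfrac12$ in \eqref{indexf} then accounts for the stated normalization. Unlike in Section~\ref{PHsection}, there is no square-root/double-valuedness to contend with, since $f_{;zz}$ is genuinely single-valued, so the argument is if anything slightly simpler. I do not expect a real obstacle; the only points requiring care are checking that $\Xi_f$ is coordinate-independent and smooth (immediate from the way $f_{;zz}$ transforms as the coefficient of $e^{\phi}dz\otimes e^{\phi}dz$, as already recorded in \eqref{Xi}) and tracking the sign and orientation conventions so that the winding number of $f_{;zz}$ around each puncture comes out to exactly $-2\iota_f(z_k)$.
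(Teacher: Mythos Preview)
Your proposal is correct and matches the paper's approach exactly: the paper simply states that Theorem~\ref{PHthm2} follows by ``repeating the proof of Theorem~\ref{PHthm}, verbatim,'' which is precisely what you have written out in detail with $\Xi_f$ in place of $\Xi$. Your added observation that the argument is mildly simpler here (since $f_{;zz}$ is single-valued, avoiding the square-root branching of $\lambda$) is accurate and worth noting.
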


By Proposition \ref{KzzProp}, this theorem reduces to Theorem \ref{PHthm} for the CR umbilical circles on a circle bundle $M$ if we choose $f=K$, where $K$ denotes the Gaussian curvature of the metric on $X$ with K\"ahler form $-i\partial\bar \partial h$, itself the curvature of the metric $h=(\cdot,\cdot)$ on the positive, holomorphic line bundle $\pi\colon L\to X$. We note that for CR umbilical points, the metric on the line bundle defines {\it both} the metric on $X$ {\it and} the function whose Hessian is considered, resulting in a fourth order (rather than second order) partial differential operator \eqref{r2} (or, equivalently, \eqref{divP}) defining the CR umbilical locus.

\subsection{Local theory. Loewner's Conjecture}\label{Loewner} In this section, we shall let $U$ be an open subset of $X$ with a local coordinate $z$. A particular choice of metric $ds^2$ in $U$ is of course the flat metric with $\phi=0$. The Hessians $H^0_f$ of smooth, real-valued functions $f$ with respect to the flat metric, such that $z=0$ is an isolated umbilical point yield foliations near $0$ that form a subclass of all foliations by quadratic forms $\Xi= g\, dz\otimes dz$ with an isolated zero at $z=0$. In the paper \cite{SmythXavier98}, the authors refer to local foliations that are given by a flat Hessian $H^0_f$, in some coordinate system,  as ``Hessian", and also show that not all local foliations are Hessian, in this sense. The index at isolated umbilical point $z=0$ of a (flat) Hessian foliation is conjectured to be bounded above by one. This is known as Loewner's Conjecture: {\it If $f$ is a smooth function in $U$ such that its flat Hessian $H^0_f$ has an isolated umbilical point at $z=0$ (i.e., $f_{zz}=\partial^2 f/\partial z^2$ has an isolated zero at $z=0$), then the index $\iota_f(0)$ of this umbilical point is $\leq 1$.} Loewner's Conjecture is still open in general. The reader is referred, e.g., to \cite{Ivanov02} for a survey of the literature and the various approaches to this conjecture. We mention that the foliation induced by the principal line fields for embeddings of surfaces into $\bR^3$ are (flat) Hessian near its umbilical points (see, e.g., \cite{SmythXavier98}) and hence, as is well known, a proof of Loewner's Conjecture would imply, by the Poincar\'e-Hopf Index Theorem, that every embedding of 2-sphere into $\bR^3$ must have at least two umbilical points (Carath\'{e}odory's Conjecture). It is not know to the authors if the foliation near isolated zeros of $\Xi_K=e^{2\phi} K_{;zz}\; dz\otimes dz$ (i.e., near the bases of CR umbilical circles in the circle bundle $M$) is Hessian in the flat sense; if this were the case, a proof of Loewner's Conjecture would then prove the analogue of Carath\'{e}odory's Conjecture for CR umbilical points on circle bundles (by Theorem \ref{PHthm}), namely the existence of a minimum of two CR umbilical circles on a circle bundle over the Riemann sphere. We leave this as an open problem:

\begin{question}\label{QA} {\rm Is the foliation induced by the CR umbilic, quadratic form $\Xi$ constructed in Section \ref{PHsection}, or equivalently that induced by the quadratic form $\Xi_K=e^{2\phi}K_{;zz} dz\otimes dz$ where $K$ is the Gauss curvature of the metric $ds^2$ in \eqref{ds}, flat Hessian in the sense of \cite{SmythXavier98}? In other words, near an isolated zero of $K_{;zz}$ at $z=z_0$, do there exist smooth, real valued functions $f$, $\rho$ such that, in some coordinate system, it holds that
\begin{equation}\label{Hessian?}
D^2f=\rho K_{;zz}?
\end{equation}
}
\end{question}

While we do not know if the foliation corresponding to the quadratic form $\Xi_K$  is flat Hessian, we shall show, however, that the foliations given by "curved Hessians" $\Xi_f$ are not flat Hessian in general. In fact, we shall show
that: {\it For any smooth quadratic form $\Xi:=g\ dz\otimes dz$ in $U$ such that $g(0)=0$, there is a smooth function $f$ and a metric $ds^2=e^{2\phi}|dz|^2$ such that the curved Hessian form $\Xi_f$ agrees with $\Xi$ to infinite order at $z=0$.} It follows, in particular, that the index of a curved Hessian foliation can be any half-integer, and, hence, the analogue of Loewner's Conjecture for curved Hessians is not true. The statement above follows immediately from the following proposition:

\begin{proposition}\label{LoewnerProp} Let $g(z,\bar z)$ be any formal power series in $z$ and $\bar z$. Then there exist real, formal power series $f(z,\bar z)$ and $\phi(z,\bar z)$ such that $f(z,\bar z)=z+\bar z+O(|z|^2)$ and $\phi(z,\bar z)=O(|z|^2)$ such that
\begin{equation}\label{curvLoewner}
e^{2\phi}f_{;zz}=D^2f-2(D\phi)(Df)=g.
\end{equation}
\end{proposition}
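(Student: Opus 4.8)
\emph{Strategy.} The plan is to solve \eqref{curvLoewner} recursively, order by order in the total degree in $(z,\bar z)$. Write $f=z+\bar z+\sum_{d\geq 2}f^{(d)}$ and $\phi=\sum_{d\geq 2}\phi^{(d)}$, where $f^{(d)},\phi^{(d)}$ are the real homogeneous components of degree $d$; the normalizations $f=z+\bar z+O(|z|^2)$ and $\phi=O(|z|^2)$ fix $f^{(0)}=0$, $f^{(1)}=z+\bar z$ and $\phi^{(0)}=\phi^{(1)}=0$. Since $Df^{(1)}=D(z+\bar z)=1$ and $\phi$ has no terms of degree $\leq 1$, extracting the degree-$m$ homogeneous part of \eqref{curvLoewner} yields, for every $m\geq 0$,
\begin{equation*}
D^2f^{(m+2)}-2\,D\phi^{(m+1)}=G^{(m)},
\end{equation*}
where $G^{(m)}$ is a homogeneous polynomial of degree $m$ depending only on $g$ and on the components $f^{(d)},\phi^{(d)}$ with $d\leq m$: indeed $D^2f^{(m+2)}$ is the only term of degree $m$ in $D^2f$, and $2\,D\phi^{(m+1)}\cdot Df^{(1)}=2\,D\phi^{(m+1)}$ is the only term of degree $m$ in $2(D\phi)(Df)$ involving $\phi^{(m+1)}$, every other contribution being built from $\phi^{(d)},f^{(d)}$ with $d\leq m$. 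Thus, proceeding by induction on $m$, it suffices to prove the following step: \emph{given a homogeneous polynomial $G$ of degree $m\geq 0$ in $(z,\bar z)$, there exist a real homogeneous polynomial $f$ of degree $m+2$ and a real homogeneous polynomial $\phi$ of degree $m+1$ with $D^2f-2\,D\phi=G$.}

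\emph{The recursive step.} Write $G=\sum_{k+l=m}G_{kl}z^k\bar z^l$, $f=\sum_{p+q=m+2}f_{pq}z^p\bar z^q$ with $f_{pq}=\overline{f_{qp}}$, and $\phi=\sum_{p+q=m+1}\phi_{pq}z^p\bar z^q$ with $\phi_{pq}=\overline{\phi_{qp}}$. Comparing coefficients of $z^k\bar z^{m-k}$, the equation $D^2f-2\,D\phi=G$ becomes $p(p-1)f_{pq}-2(p-1)\phi_{p-1,q}=G_{p-2,q}$ for all $p\geq 2$ (with $q=m+2-p$). Hence, once $\phi$ is chosen (subject to its own reality), every coefficient $f_{pq}$ with $p\geq 2$ is forced, and $f_{0,m+2},f_{1,m+1}$ — which do not occur — are then fixed by reality. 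The remaining content of ``$f$ is real'' is the family of relations $f_{pq}=\overline{f_{qp}}$ with $p,q\geq 2$; substituting the formula for the $f_{pq}$ and using $\overline{\phi_{qp}}=\phi_{pq}$, a direct computation turns this into a linear system consisting of one equation $q\,\phi_{p-1,q}-p\,\phi_{p,q-1}=S_{p,q}$ for each pair $p>q\geq 2$ with $p+q=m+2$, where $S_{p,q}$ is an explicit affine expression in the $G_{kl}$, together with — depending on the parity of $m$ — a single reality relation for the ``central'' coefficient of $f$ (when $m$ is even) or of $\phi$ (when $m$ is odd). For $m\leq 1$ there are no such pairs and one simply takes $\phi^{(m+1)}=0$.

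\emph{Solving the chain.} The point is that each of these equations couples only coefficients of $\phi$ with consecutive first index. Listing the coefficients $\phi_{pq}$ ($p+q=m+1$) along the anti-diagonal as $\phi_{m+1,0},\phi_{m,1},\phi_{m-1,2},\dots$, the equation for a pair $p>q\geq 2$ links the two neighbours $\phi_{m+2-q,\,q-1}$ and $\phi_{m+1-q,\,q}$, and the coefficient $q$ of the latter is nonzero. So the system can be solved by marching along the anti-diagonal: $\phi_{m+1,0}$ occurs in no relation, so set it to $0$; choose $\phi_{m,1}$ next; then $\phi_{m-1,2},\phi_{m-2,3},\dots$ are successively determined through the middle of the anti-diagonal; and the last relation — reality of the central coefficient — becomes a single real-linear equation in $\phi_{m,1}$, whose coefficient on the relevant real component of $\phi_{m,1}$ is a nonzero real number (a ratio of products of positive integers), so it is satisfied by fixing that component (leaving the other, and $\phi_{m+1,0}$, free). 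Reality then determines the remaining coefficients of $\phi$, hence $f$, completing the step; iterating over $m=0,1,2,\dots$ yields the formal power series $f$ and $\phi$, with no convergence issue since only finitely many coefficients are fixed at each degree.

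\emph{Main obstacle.} The delicate point is that the chain must ``close up'': the final relation (reality of the central coefficient) has to be consistent with the values already propagated along the anti-diagonal. This reduces to the observation that the right-hand side arising at the end of the chain is automatically of the correct type (purely real, or purely imaginary, respectively), which follows from the explicit form of $S_{p,q}$ together with the fact that the $G_{kl}$ are arbitrary complex numbers subject to no symmetry. An equivalent and perhaps more transparent route to the same recursive step is to rewrite \eqref{curvLoewner} in the divergence form $e^{2\phi}D\bigl(e^{-2\phi}Df\bigr)=g$, i.e.\ $D\bigl(e^{-2\phi}Df\bigr)=e^{-2\phi}g$; solving for the series $v:=e^{-2\phi}Df$ by formal $z$-antidifferentiation gives $Df=e^{2\phi}v$, and one then exploits the freedom in $\phi$ and in the ``constant of integration'' (a series in $\bar z$) to arrange that $e^{2\phi}v$ is $Df$ for a real $f$ with $f=z+\bar z+O(|z|^2)$; the compatibility condition for this is again the chain described above.
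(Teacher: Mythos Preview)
Your argument is correct and follows essentially the same route as the paper: both expand in homogeneous degrees and reduce the induction step to the surjectivity of the real-linear map $T_m\colon (f_{m+2},\phi_{m+1})\mapsto D^2f_{m+2}-2D\phi_{m+1}$ from $\mathcal H^\bR_{m+2}\times\mathcal H^\bR_{m+1}$ onto $\mathcal H_m$. The only organizational difference is that the paper establishes surjectivity indirectly, by first computing $\dim_\bR\ker T_m=3$ (rewriting $T_m=0$ as $Df_{m+2}-2\phi_{m+1}=c\bar z^{m+1}$ and solving a short chain of coefficient relations) and then counting real dimensions ($2m+5$ on the source, $2m+2$ on the target), whereas you solve $T_m(f,\phi)=G$ directly by marching along the anti-diagonal of the coefficients of $\phi$. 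Your chain is the same chain that appears in the paper's kernel computation; the dimension count is simply a cleaner packaging of the same linear algebra and spares you the case analysis at the centre.

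One expository remark: your ``Main obstacle'' paragraph is muddled and partly contradicts what you correctly said just before it. There is no consistency condition that is ``automatically of the correct type''; the $G_{kl}$ carry no symmetry, so nothing about them is forced. The point is exactly what you already stated: propagating the chain expresses the central coefficient as an affine function of $\phi_{m,1}$ with a nonzero real leading coefficient (a product of positive rationals), so the single real reality condition at the centre is solved by fixing one real component of $\phi_{m,1}$. That is the whole argument; the divergence-form sketch at the end is not needed.
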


\begin{proof} Let us decompose the power series into homogeneous terms
\begin{equation}\label{decomphomo}
g=\sum_{k=1}^\infty g_k,\ f=z+\bar z+\sum_{k=2}^\infty f_k,\ \phi=\sum_{k=2}^\infty \phi_k, \quad g_k\in \mathcal H_k,\ f_k,\phi_k\in \mathcal H^\bR_k,
\end{equation}
where $\mathcal H_k$ denotes the space of homogeneous polynomials of degree $k$ in $z,\bar z$ and $\mathcal H^\bR_k$ the (real) subspace consisting of those homogeneous polynomials that are real-valued. By identifying terms of degree $m$ in \eqref{decomphomo}, we find for $m=0$,
\begin{equation}\label{m=0}
D^2f_2=g_0,
\end{equation}
and then
the recursion formula
\begin{equation}\label{mrecurr}
D^2f_{m+2}-2D\phi_{m+1}=g_m +
\sum_{k=2}^{m}D\phi_{k} Df_{m+2-k},\quad m\geq 1,
\end{equation}
for $f_{m+2}\in \mathcal H^\bR_{m+2}$ and $\phi_{m+1}\in \mathcal H^\bR_{m+1}$ in terms of
$$
g_m\in \mathcal H_m,\ f_{k+1}\in \mathcal H^\bR_{k+1},\  \phi_{k}\in \mathcal H^\bR_k,\quad k\leq m.
$$
For $m=1$ the sum on the right in \eqref{mrecurr} is vacuous and, as is customary, should be regarded as a zero term.
Consider the real linear operator $T_m\colon \mathcal H^\bR_{m+2}\times \mathcal H^\bR_{m+1}\to \mathcal H_m$ defined by
\begin{equation}\label{Tm}
T_m(f_{m+2},\phi_{m+1})=D^2f_{m+2} - 2D\phi_{m+1}.
\end{equation}
We have the following:

\begin{lemma}\label{Tmlemma}
$\dim_\bR\ker T_m=3$, $m\geq 1$.
\end{lemma}

\begin{proof}[Proof of Lemma $\ref{Tmlemma}$]
We may decompose $f_{m+2}$, using the fact that $f_{m+2}$ is real-valued, as follows
\begin{equation}\label{feven}
f_{m+2}(z,\bar z)=\sum_{k=n+1}^{m+2}a_k(z^k\bar z^{m+2-k}+z^{m+2-k}\bar z^k),\quad  a_{n+1}\in \bR,\ a_{n+2},\ldots, a_{m+2}\in \bC,
\end{equation}
if $m=2n$ is even, and
\begin{equation}\label{fodd}
f_{m+2}(z,\bar z)=\sum_{k=n+2}^{m+2}a_k(z^k\bar z^{m+2-k}+z^{m+2-k}\bar z^k),\quad a_{n+2},\ldots, a_{m+2}\in \bC
\end{equation}
if $m=2n+1$ is odd. Similarly, we decompose $\phi_{m+1}$ as
\begin{equation}\label{phieven}
\phi_{m+1}(z,\bar z)=\sum_{k=n+1}^{m+1}b_k(z^k\bar z^{m+1-k}+z^{m+1-k}\bar z^k),\quad b_{n+1},\ldots, b_{m+1}\in \bC
\end{equation}
if $m=2n$ is even, and
\begin{equation}\label{phiodd}
\phi_{m+1}(z,\bar z)=\sum_{k=n+1}^{m+1}b_k(z^k\bar z^{m+1-k}+z^{m+1-k}\bar z^k),\quad  b_{n+1}\in \bR,\ b_{n+2},\ldots, b_{m+1}\in \bC,
\end{equation}
if $m=2n+1$ is odd. Note that $T_m(f_{m+2},\phi_{m+1})=D(Df_{m+2}-2\phi_{m+1})=0$ is equivalent to
\begin{equation}\label{Sm}
Df_{m+2}-2\phi_{m+1}=c\bar z^{m+1}, \quad c\in \bC.
\end{equation}
We shall first proceed under the assumption that $m=2n+1\geq 1$ is odd. Substituting \eqref{fodd} and \eqref{phiodd} into \eqref{Sm} and identifying terms of the same type $z^k\bar z^{m+1-k}$, we find the following system of equations:
\begin{align}
\label{eq1}(n+2)a_{n+2}-4b_{n+1}=0 &,\\
\label{eq2}ka_{k}-2b_{k-1}=0 &,\quad k=n+3,\ldots, m+2,\\
\label{eq3}(m+2-k)a_k-2b_{k}=0 &,\quad k=n+2,\ldots, m,\\
\label{eq4}a_{m+1}-2b_{m+1}=c &.
\end{align}
Note that if $m=1$ (i.e., $n=0$) then the set of equations in \eqref{eq3} is vacuous. It is now straightforward to verify that any choice of $b_{n+1}$ and $c$ determines uniquely the rest of the coefficients in the system of equations \eqref{eq1}-\eqref{eq4}. Also, recall that in this case, when $m=2n+1$ is odd, the coefficient $b_{n+1}$ is real. This proves Lemma \ref{Tmlemma} when $m=2n+1$ is odd. The even case (starting with $m=2$) is completely analogous; $a_{n+1}$ and $c$ determine uniquely the rest of the coefficients.  The details in this case are left to the reader.
\end{proof}

\begin{remark}\label{Tmrem}{\rm It is also easy to describe the actual kernel of $T_m$ using the proof above. For instance, if $m=2n+1$ is odd, then $\ker T_m$ is parametrized by $b_{n+1}\in \bR$ and $c\in \bC$ via \eqref{eq1}-\eqref{eq4}. One notices that it is also possible to prescribe $b_{m+1}$ instead of $c$ in \eqref{eq4}. In the even case, $m=2n$, one instead prescribes $a_{n+1}\in \bR$ and $b_{m+1}\in \bC$.
}
\end{remark}

\begin{lemma}\label{Tmsurlemma} The real linear operator $T_m\colon \mathcal H^\bR_{m+2}\times \mathcal H^\bR_{m+1}\to \mathcal H_m$, $m\geq 1$, is surjective.
\end{lemma}

\begin{proof}[Proof of Lemma $\ref{Tmsurlemma}$]
We note that
$$
\dim_\bR\mathcal H^\bR_{m+2}\times \mathcal H^\bR_{m+1}=m+3+m+2=2m+5;\quad \dim_\bR\mathcal H_{m}=2(m+1)=2m+2.
$$
The conclusion of the lemma now follows immediately from Lemma \ref{Tmlemma}.
\end{proof}

We return to the proof of Proposition \ref{LoewnerProp}. Clearly, the equation \eqref{m=0} (which is an equation of complex numbers) has a unique solution for $f_2$, once a choice of the real coefficient of $|z|^2$ has been made. Higher order terms, $f_{m+2}$ and $\phi_{m+1}$ for $m\geq 1$, are then recursively determined by \eqref{mrecurr}, which is made possible by Lemma \ref{Tmsurlemma}.
This completes the proof of Proposition \ref{LoewnerProp}.
\end{proof}

\begin{remark}\label{Loewneruniquerem} {\rm We note that the recursive solutions to \eqref{mrecurr} are not unique, but can be made so by using Remark \ref{Tmrem}. For instance, we can require $b_{m+1}=0$ at each step, which means that $\phi(z,\bar z)$ has no harmonic terms, and also specify the coefficient in front of $|z|^{2n}$ in both $f$ or $\phi$ (at alternate recursive steps, depending on the parity of $m$). This yields a unique solution. In other words, given two real power series
\begin{equation}\label{initcond}
f^0(z,\bar z)=\sum_{n=1}^\infty \alpha_n|z|^{2n},\ \phi^0(z,\bar z)=\sum_{n=1}^\infty \beta_n|z|^{2n},\quad \alpha_n,\beta_n\in \bR,
\end{equation}
there is a unique solution to \eqref{curvLoewner} such that $\phi(z,\bar z)$ has no harmonic terms (i.e., $\phi(z,0)=\phi(0,\bar z)=0$) and such that the $|z|^{2n}$ terms of $f$ and $\phi$ coincide with $f^0$ and $\phi^0$, respectively.
}
\end{remark}

\subsection{CR umbilical points on a circle bundle over a torus}\label{Torus} In this section, we shall discuss the case where the Riemann surface $X$ is a torus, $\chi(X)=0$, in which case the index formula in Theorem \ref{PHthm} allows the possibility that there is a line bundle $L$ with metric $h$ such that the unit circle bundle $M\subset L$ has {\it no} umbilical points. The authors have been unable to settle the question of the existence of such a circle bundle, and leave this as an open problem:

\begin{question}\label{QB} Does there exist a holomorphic line bundle $\pi\colon L\to X$ over a torus $X$ with a positively curved metric $h=(\cdot,\cdot)$ such that the unit circle bundle $M\subset L$ has no umbilical points?
\end{question}

We shall rephrase this question in a more explicit form, and show that if the metric is assumed to have some symmetry, then the answer is `no', i.e., the circle bundle $M$ must still have umbilical points. Let $X=\bC/\Lambda$ be a torus, where $\Lambda\subset \bC$ is a discrete lattice
\begin{equation}\label{Lambda}
\Lambda=\{m+n\omega\colon m,n\in \bZ\},
\end{equation}
for some complex number $\omega$ with $\im \omega\neq 0$. If $\pi\colon L\to X$ is a holomorphic line bundle with a positively curved metric $h=(\cdot,\cdot)$, then the curvature $i\Theta=-i\partial\bar \partial \log h$ is a global, positive $(1,1)$-form on $X$, i.e., $i\Theta=ie^{u}dz\wedge d\bar z$ for some smooth function $u(z,\bar z)$, defined on $\bC$ with periods $1$ and $\omega$. In a local trivialization of $L|_U\cong U\times \bC$ with $h(z,\bar z)=|s|_z^2$, where $s$ is a local holomorphic, non-vanishing section of $L$ in $U$, we have $e^u=-D\bar D\log h$ (denoted by $a^{-1}$ in Section \ref{prelim}). The locus of umbilical points on $M$ consists of the circles $\pi^{-1}(z_0)$, corresponding to points $z_0\in \bC$ that are zeros of the function $r=Pu$, which is smooth on $\bC$ with periods $1$ and $\omega$; where $P$ is the differential operator on $\bC$ given by \eqref{r2} or, equivalently in divergence form by \eqref{divP}. An equivalent description of the locus, by Proposition \ref{KzzProp}, is given by instead using the zero locus of the covariant derivative $K_{;zz}$, where $K$ is the Gauss curvature of the metric $ds^2$ given by \eqref{ds}. Thus, if there is a unit circle bundle $\pi\colon  M\to X=\bC/\Lambda$ without umbilical points, then there exists a smooth function $u$ on $\bC$, periodic with periods $1$ and $\omega$, such that $Pu\neq 0$ on $\bC$. We remark that $r=Pu$ is in fact obtained by applying a third-order differential operator to the function $q=Du$, and hence adding a constant to $u$, i.e., replacing $u$ by $u+C$ for some constant $C$, does not change the function $r=Pu$.

Conversely, it is known (e.g., \cite{GHbook}, Ch.\ 1.2) that if $\pi\colon L\to X=\bC/\Lambda$ is a positive line bundle and $u(z,\bar z)$ is a smooth function on $\bC$ with periods $1$ and $\omega$ such that
\begin{equation}\label{chernclass}
\frac{i}{2\pi}\int_Xe^udz\wedge d\bar z=c_1(L)\in H^2(X,\bZ)\cong \bZ,
\end{equation}
where $c_1(L)\in H^2(X,\bZ)$ denotes the first Chern class of $L$, then there is a metric $h=(\cdot,\cdot)$ on $L$ such that its curvature is given by $i\Theta=ie^{u}dz\wedge d\bar z$. Thus, if there is a smooth function $u$ on $\bC$ with periods $1$ and $\omega$ such that $Pu\neq 0$ on $\bC$, then for any positive line bundle $\pi\colon L\to X=\bC/\Lambda$, there is positively curved metric $h=(\cdot,\cdot)$ on $L$ (obtained by replacing $u$ by $u+C$, which does not change $Pu$, to achieve \eqref{chernclass} and following the procedure in \cite{GHbook}, Ch.\ 1.2) such that the unit circle bundle $M\subset L$ has no umbilical points. As mentioned above, the authors have been unable to construct such $u$ (which, if they exist, should be fairly ``generic"), but also unable to show that they cannot exist. We therefore pose this as an open question, equivalent to Question \ref{QB} above:

\begin{question}\label{QB'}
Do there exist a complex number $\omega$ with $\im \omega\neq 0$ and a smooth function $u=u(z,\bar z)$ on $\bC$ with periods $1$ and $\omega$ such that $Pu\neq 0$, where $P$ is the differential operator defined by \eqref{r2} or, equivalently, \eqref{divP}?
\end{question}

We shall conclude this section by showing that the answer to this question, and consequently that of Question \ref{QB} is `no' if we require some additional symmetry. More precisely, we shall prove:

\begin{proposition}\label{notorus} Let $\omega$ be any complex number with $\im \omega\neq 0$ and $u=u(z,\bar z)$ a smooth function on $\bC$ with periods $1$ and $\omega$. If there is a real, constant vector field $Y=\alpha\partial/\partial x+\beta\partial/\partial y$, $\alpha,\beta\in \bR$, such that $Yu=0$, then $Pu$ has zeros on $\bC$.
\end{proposition}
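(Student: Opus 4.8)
The plan is to use the symmetry $Yu=0$ to reduce $u$ to a function of a single real variable transverse to $Y$, and then to exhibit the fourth-order operator $P$, restricted to such functions, as $e^{2u}$ times a total derivative of a \emph{periodic} quantity, so that $Pu$ cannot be of one sign. First I would normalize the symmetry. One may assume $Y\neq 0$ (if $Y=0$ the hypothesis is vacuous and the assertion is exactly the open Question \ref{QB'}). Writing $z=x+iy$ and $D=\partial_z=\frac{1}{2}(\partial_x-i\partial_y)$, we have $Y=wD+\bar w\bar D$ with $w:=\alpha+i\beta\in\bC\setminus\{0\}$. Perform the complex-linear change of base coordinate $z\mapsto z/w$; this replaces $\Lambda$ by the lattice $\Lambda/w$ and multiplies the local expression of the global quadratic section $\Xi$ (hence of $r=Pu$, by \eqref{lambda}) by a nonvanishing factor, so it affects neither the umbilical locus nor the truth of the conclusion. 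After this change $w=1$, so $Y=\partial/\partial x$, and $Yu=0$ reads $\partial u/\partial x\equiv 0$; thus $u=u(y)$ depends only on $y=\im z$.

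Next I would observe that $u$ is periodic in $y$, or constant. As a function on $\bC$, $u$ is invariant under all translations along the real axis and under translation by every element of the lattice (still written $\Lambda$), hence under the subgroup they generate; projecting by $\im\colon\bC\to\bR$, the image $\im(\Lambda)$ is a nonzero subgroup of $\bR$ (nonzero since the lattice is not contained in $\bR$). If $\im(\Lambda)$ is dense, then $u$, being continuous and invariant under a dense set of $y$-translations, is constant; then $K\equiv 0$, so $Pu=-\frac{e^{2u}}{2}K_{;zz}\equiv 0$ by Proposition \ref{KzzProp}, and we are done. Otherwise $\im(\Lambda)=p\bZ$ for some $p>0$, and $u=u(y)$ is a smooth, $p$-periodic function of the single variable $y$.

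The computational heart is the divergence structure. By Proposition \ref{KzzProp}, $Pu=-\frac{e^{2u}}{2}K_{;zz}$, where $K$ is the Gauss curvature of the metric $ds^2=e^{u}|dz|^2$ of \eqref{ds}; since $Y$ is a constant vector field annihilating $u$, it annihilates $K$ too, so $K=K(y)$ is $p$-periodic. Using $Dg=-\frac{i}{2}g'$ for functions $g=g(y)$ together with the identity $K_{;zz}=D(e^{-u}DK)$ recorded just before \eqref{divP}, one obtains
\[
K_{;zz}=D\bigl(e^{-u}DK\bigr)=-\frac{1}{4}\frac{d}{dy}\bigl(e^{-u}K'\bigr),\qquad\text{hence}\qquad Pu=\frac{e^{2u}}{8}\frac{d}{dy}\bigl(e^{-u}K'\bigr).
\]
The function $e^{-u}K'$ is smooth, real-valued and $p$-periodic, so it attains its maximum at some $y_0\in\bR$, where $\frac{d}{dy}(e^{-u}K')(y_0)=0$; equivalently, $\int_0^p e^{-2u}Pu\,dy=\frac{1}{8}\bigl[(e^{-u}K')(p)-(e^{-u}K')(0)\bigr]=0$, which forces $Pu$ to change sign. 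In either case $Pu(y_0)=0$ for some $y_0$, so $Pu$ has a zero on $\bC$ and $M$ carries an umbilical circle, as claimed.

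The main obstacle, as I see it, is purely bookkeeping of two points: (i) dispatching the ``irrational orbit direction'' case, where one must notice that the symmetry together with the lattice forces $u$ to be constant; and (ii) the divergence identity above — that $P$, restricted to functions of the variable transverse to $Y$, equals $e^{2u}$ times an exact $y$-derivative of a periodic function. It is precisely for (ii) that the divergence form \eqref{divP} of $P$ (equivalently, Proposition \ref{KzzProp} combined with $K_{;zz}=D(e^{-u}DK)$) is essential; the structure would not be visible from the expanded expression \eqref{r2}.
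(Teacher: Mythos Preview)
Your proof is correct and rests on the same idea as the paper's: the divergence form \eqref{divP} (equivalently, $K_{;zz}=D(e^{-u}DK)$ from Proposition~\ref{KzzProp}) exhibits $e^{-2u}Pu$ as a directional derivative of a real, doubly periodic function, which must therefore vanish at an extremum.

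The only notable difference is organizational. You first rotate coordinates so that $Y=\partial_x$, which forces a case split according to whether $\im(\Lambda)$ is discrete or dense in $\bR$ (the latter yielding $u$ constant). The paper avoids both the normalization and the case split by arguing invariantly: writing $D=aY+bY'$ for a second constant field $Y'$, one gets $e^{-2u}Pu=b^2\,Y'(e^{-u}Y'v)$ with $v=e^{-u}D\bar D u$, and the real function $\psi=e^{-u}Y'v$ lives on the \emph{compact} torus $\bC/\Lambda$ regardless of the arithmetic of $Y$, so it has critical points where $Y'\psi=0$. Your version is equally valid and perhaps more transparent computationally; the paper's is a bit shorter. (One small wording issue: after the integral argument you say it ``forces $Pu$ to change sign''; strictly it forces $e^{-2u}Pu$ to vanish somewhere, since it could be identically zero---either way the conclusion follows.)
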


\begin{remark} {\rm If the vector field $Y$ does not have compact leaves on $X=\bC/\Lambda$, then of course $u$ must be constant and consequently $Pu$ vanishes identically. Thus, the proposition is only non-trivial when $Y$ has compact leaves, which happens precisely when $\alpha \im \omega/\beta$ is rational.
}
\end{remark}

\begin{proof}[Proof of Proposition $\ref{notorus}$] We shall use the divergence form of $Pu$ given by \eqref{divP} to prove that $Pu$ must have zeros. Note that $v:=e^{-u}D\bar D u$ is a real-valued function, and according to \eqref{divP} we have $e^{-2u}Pu=D(e^{-u}Dv)$. The fact that $Yu=0$ (and $Y$ has constant coefficients) implies that $Yv=0$ as well. Let $Y'$ be another real, constant vector field such that $Y$ and $Y'$ are linearly independent, and note that there are complex numbers $a,b\in \bC$ such that $D=aY+bY'$. We conclude that
$$
e^{-2u}Pu=D(e^{-u}Dv)=D(e^{-u}bY'v)=b^2Y'(e^{-u}Y'v),
$$ since $Y(e^{-u}Y'v)=0$ as well. Since $\psi:=e^{-u}Y'v$ is a smooth, real-valued function on $\bC$, periodic with periods $1$ and $\omega$ (i.e., a smooth, real-valued function on the torus $X=\bC/\Lambda$), $\psi$ has extreme points, at which $Y'\psi$ must vanish. This completes the proof that $Pu$ must have zeros when $Yu=0$.
\end{proof}

As a consequence of this proposition, we obtain the following partial answer to Question \ref{QB} under the additional assumption that $M$ has "many symmetries". Recall that $\aut(M)$ denotes the group of CR automorphisms of a CR manifold $M$.

\begin{theorem}\label{torusthm} Let $X=\bC/\Lambda$ be a torus, $\pi\colon L\to X$ a holomorphic line bundle with a positively curved metric $h=(\cdot,\cdot)$. Assume that the unit circle bundle $M\subset L$ is such that $\aut(M)$ contains a two-dimensional abelian subgroup $G$, which contains the $U(1)$-action. Then $M$ has umbilical points.
\end{theorem}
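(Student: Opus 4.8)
The strategy is to reduce Theorem~\ref{torusthm} to Proposition~\ref{notorus}: it suffices to produce a nonzero, real, constant vector field $Y=\alpha\,\partial/\partial x+\beta\,\partial/\partial y$ on $\bC$ with $Yu=0$, where $u=\log(-D\bar D\log h)$ is the potential from Section~\ref{Torus} (so that $u$ is smooth on $\bC$ with periods $1$ and $\omega$, and the umbilical locus of $M$ is the zero set of $r=Pu$, with $P$ as in \eqref{r2}, \eqref{divP}). Passing to the identity component, I would assume that $G$ is a connected, abelian, two-dimensional Lie subgroup of $\aut(M)$ still containing the $U(1)$-action; its Lie algebra $\mathfrak g$ is then two-dimensional and abelian and contains the generator $\mathbf T$ of the $U(1)$-subgroup, whose induced vector field on $M$ is the fiber field $T=\partial/\partial t$ of Section~\ref{prelim}. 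I then pick $\mathbf W\in\mathfrak g$ with $\mathbf W\notin\bR\,\mathbf T$ and let $W$ be the induced vector field on $M$; since $G$ acts by CR automorphisms and $\mathfrak g$ is abelian, $W$ is an infinitesimal CR automorphism with $[T,W]=0$.

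Because $[T,W]=0$, the field $W$ is $U(1)$-invariant, hence descends to a vector field $Y:=\pi_*W$ on the torus $X=M/U(1)$. The flow of $W$ consists of CR automorphisms of $M$ commuting with the $U(1)$-action, so (using $T^{1,0}X=\pi_*T^{1,0}M$) it descends to a one-parameter group of biholomorphisms of $X$, i.e.\ to a subgroup of $\aut(X)_0$. Since $X$ is a complex torus, $\aut(X)_0$ consists precisely of the translations $z\mapsto z+c$; hence the flow of $W$ descends to $z\mapsto z+tc$ for some $c=\alpha+i\beta\in\bC$, and $Y=\alpha\,\partial/\partial x+\beta\,\partial/\partial y$ is real and constant in the coordinate $z$ on $\bC$. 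Moreover $Y\neq 0$: if $\pi_*W=0$ then $W=fT$ for a real function $f$ on $M$, and then $[T,W]=0$ gives $Tf=0$ while the infinitesimal CR-automorphism condition applied to $[W,Z_1]=f[T,Z_1]-(Z_1f)T$ (with $Z_1$ a local frame for $T^{1,0}M$, and $[T,Z_1]$ proportional to $Z_1$ since the circle-bundle contact form is torsion-free) forces $Z_1f=0$ and, by reality, $\bar Z_1f=0$; thus $f$ is constant and $W\in\bR\,T$, contrary to the choice of $\mathbf W$.

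Next I would show $Yu=0$. The flow $\Phi_t$ of $W$ consists of $U(1)$-bundle automorphisms of $M$; via the associated-bundle construction each $\Phi_t$ extends to a fiberwise-linear bundle map $\tilde\Phi_t$ of $L=M\times_{U(1)}\bC$ covering the translation $\phi_t\colon z\mapsto z+tc$, and, since $M$ is the unit circle bundle of $(L,h)$, this $\tilde\Phi_t$ preserves the Hermitian metric $h$; moreover $\tilde\Phi_t$ is holomorphic because $\Phi_t$ is CR (the correspondence underlying Epstein's realization \cite{Epstein92}). Being holomorphic and $h$-preserving, $\tilde\Phi_t$ preserves the Chern curvature form $i\Theta=-i\partial\bar\partial\log h=i\,e^{u}\,dz\wedge d\bar z$, and since $\phi_t$ fixes $dz\wedge d\bar z$ we get $e^{u(z+tc)}=e^{u(z)}$ for all $t$, i.e.\ $u(z+tc)=u(z)$. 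Differentiating at $t=0$ yields $Yu=0$.

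With a nonzero, real, constant vector field $Y$ annihilating $u$ in hand, Proposition~\ref{notorus} applies directly and gives a zero $z_0\in\bC$ of $r=Pu$; then $\pi^{-1}(z_0)\subset M$ is an umbilical circle, so $M$ has umbilical points. The step I expect to be the main obstacle is making the second and third paragraphs precise: the dictionary between CR automorphisms of $M$, holomorphic metric-preserving bundle automorphisms of $L$, and (translation) biholomorphisms of $X$ through Epstein's construction, together with the classical fact that a one-parameter group of biholomorphisms of a complex torus consists only of translations. Once these are in place, the reduction to Proposition~\ref{notorus} is immediate, and Theorem~\ref{Maintorus} then follows by combining this with Theorem~\ref{Main1}.
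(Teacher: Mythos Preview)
Your proposal is correct and follows the same overall strategy as the paper: reduce to Proposition~\ref{notorus} by showing that the second generator of $G$ descends to a nonzero real, constant vector field $Y$ on the torus with $Yu=0$. The paper carries out the step $Yu=0$ by an explicit local computation---passing to rigid coordinates $\tau=e^{-iw/2}$, using the commutation relation $U_t\circ\phi_s=\phi_s\circ U_t$ to force $g_s(z,w)=w+k_s(z)$ with $k_s$ holomorphic, and concluding that $YF$ is harmonic, hence $D\bar D\,YF=0$---whereas you argue more geometrically that the $U(1)$-equivariant CR flow extends to a holomorphic, $h$-preserving bundle automorphism of $L$, which therefore preserves the curvature $i\,e^u\,dz\wedge d\bar z$, giving $u(z+tc)=u(z)$. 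Both routes amount to the observation that in a local trivialization $\Phi_t(z,\tau)=(z+tc,\mu_t(z)\tau)$ with $\mu_t$ holomorphic, so $\log h(z)=\log|\mu_t(z)|^2+\log h(z+tc)$ and the pluriharmonic term drops under $D\bar D$. Your argument is a bit cleaner conceptually; the paper's is more self-contained since it avoids appealing to the bundle dictionary you flag as the main obstacle (the paper instead cites \cite[Lemma~3.17]{Epstein92} for the analogous step that $Y\neq0$). Either way, once $Yu=0$ is in hand, Proposition~\ref{notorus} finishes the proof exactly as you describe.
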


\begin{proof} We shall reduce the proof to an application of Proposition \ref{notorus}. We let $U_t$ be the generator of the $U(1)$-action and we let $\phi_s$ be a generator of a 1-parameter subgroup such that $U_t$ and $\phi_s$ generate the identity component of $G$. The fact that $G$ is abelian means that \begin{equation}\label{abelian}
U_t\circ \phi_s=\phi_s\circ U_t.
\end{equation}
The arguments in the proof of Lemma 3.17 in \cite{Epstein92} show that there is a 1-parameter subgroup $\psi_s$ of the (holomorphic) automorphism group of $X$ such that $\pi\circ \phi_s=\psi_s\circ\pi$ and such that the infinitesimal generator $Y$ of $\psi_s$ is not zero (i.e., $\psi_s$ is not constant in $s$). We let $(z,\tau)$ be coordinates in a local trivialization $L|_U\cong U\times \bC$, as in Section \ref{prelim}. We may assume that $z$ is a ``global" coordinate on $\bC$ such that $X=\bC/\Lambda$, where $\Lambda$ is the discrete lattice \eqref{Lambda} generated by $1$ and $\omega$. In these coordinates, we have $U_t(z,\tau):=(z,e^{it}\tau)$. Moreover, since $\psi_s$ is a 1-parameter subgroup of the holomorphic automorphism group of $\bC/\Lambda$, it must be of the form $\psi_s(z)=z+s\zeta$ for some $\zeta\in \bC^*$, and its infinitesimal generator $Y$ is then a real, constant vector field in $\bC$; more precisely, $Y=2\re (\zeta\partial/\partial z)$. Recall that the unit sphere bundle $M$ is given by the equation $|\tau|^2h(z,\bar z)=1$, where $h(z,\bar z)=|s_0|^2_z$ for the trivializing, local holomorphic section $s_0$ in $U$. We claim that $Yu=0$, where $u=-D\bar D\log h$. Let us assume, without loss of generality, that $h(0,0)=1$, and perform the local change of variables $\tau=e^{-iw/2}$ near $w=0$. We then obtain an equation for $M$ of the form
\begin{equation}\label{Imw=F}
\im w=F(z,\bar z),
\end{equation}
where $F(z,\bar z)=-\log h(z,\bar z)$; the point $(z,\tau)=(0,1)\in M$ now corresponds to $(z,w)=(0,0)$. Recall that $\phi_s$ is a 1-parameter family of CR automorphisms and, hence in particular, local CR diffeomorphisms on $M$ near $(0,0)$. It is well known that such mappings coincide with the smooth boundary values on $M$ of local biholomorphic mappings defined on the ``pseudoconvex side" of $M$; we shall denote, by a slight abuse of notation, these biholomorphic mappings by $\phi_s(z,w)$. We may express $\phi_s(z,w)$ as follows:
\begin{equation}\label{phi_s}
\phi_s(z,w)=(z+s\zeta,g_s(z,w)),
\end{equation}
where $g_s(z,w)$ is a 1-parameter family of holomorphic functions that extend smoothly to $M$ (and satisfy $g_0(z,w)=w$).  Using the relation \eqref{abelian}, which holds on $M$, noting that $U_t(z,\tau)=(z,e^{it}\tau)$ implies that $U_t(z,w)=(z,w-2t)$, we conclude that
\begin{equation}\label{abeliang}
g_s(z,r+iF(z,\bar z))-2t=g_s(z,r-2t+iF(z,\bar z)),
\end{equation}
where we have parametrized $M$ by $(z,r)\mapsto (z,r+iF(z,\bar z))$, $z\in \bC$ and $r\in \bR$.
Differentiating with respect to $t$ and setting $t=0$, we conclude that $\partial g_s/\partial w=1$ along $M$. Standard uniqueness properties of holomorphic functions implies that
\begin{equation}\label{k_s}
g_s(z,w)=w+k_s(z),
\end{equation}
where $k_s(z)$ is a 1-parameter family of holomorphic functions in $z$. The fact that $\phi_s$ sends $M$ to $M$ means that we have the identity
\begin{equation}\label{phiMtoM}
\im g_s(z,r+iF(z,\bar z))=F(z+s\zeta,\bar z+s\zeta).
\end{equation}
Substituting \eqref{k_s} into this, we obtain
\begin{equation}
F(z,\bar z)+\im k_s(z)=F(z+s\zeta,\bar z+s\zeta),
\end{equation}
which by differentiating with respect to $s$, setting $s=0$, and using the definition of the infinitesimal generator $Y$ shows that $YF$ coincides with a harmonic function of $z$ (namely $\im \partial k_s(z)/\partial s|_{s=0}$). It follows that $D\bar DYF=0$. Recall that $F=-\log h$ and $u=-D\bar D\log h$, and hence we have
$$
Yu=Y(-D\bar D\log h)=D\bar D Y(-\log h)=D\bar D YF=0,
$$
as claimed above. Now, it follows from Proposition \ref{notorus} that $Pu$ must have zeros, and hence $M$ has umbilical points. This completes the proof of Theorem \ref{torusthm}.
\end{proof}

\subsection{Locally spherical circle bundles} For completeness, we shall also briefly discuss locally spherical (totally umbilical) circle bundles. A straightforward consequence of Proposition \ref{KzzProp} and a theorem of Calabi is that there are locally spherical, unit circle bundles $M$ in every positive line bundle $\pi\colon L\to X$ over a compact Riemann surface, and these are essentially unique. Before stating this result more precisely, let us recall that if $\omega$ is a positive $(1,1)$-form on $X$ such that
$$
\int_X\omega=1,
$$
then there is a unique metric $(\cdot,\cdot)$ on the positive line bundle $\pi\colon L\to X$ such that its curvature form satisfies
$$
\frac{i}{2\pi}\,\Theta=c_1(L)\,\omega,
$$
where $c_1(L)\in H^2(X,\bZ)$ as above denotes the first Chern class of $L$. Also, recall that on a compact Riemann surface of genus $g$, there is a unique metric, normalized so that $X$ has unit area, for which the curvature is constant, $K=2-2g=\chi(X)$.

\begin{theorem}\label{Calabi} Let $X$ be a compact Riemann surface and let
$\omega_0$ denote the K\"ahler form of the constant curvature metric on $X$, normalized so that $X$ has unit area. If $\pi\colon  L\to X$ is a positive holomorphic line bundle on $X$, then the unit sphere bundle $M$ with respect to a positively curved metric $(\cdot,\cdot)$ on $L$ is locally spherical if and only if the curvature form of the metric satisfies
\begin{equation}\label{constcurv}
\frac{i}{2\pi}\,\Theta=c_1(L)\,\omega_0,
\end{equation}
where $c_1(L)\in H^2(X,\bZ)$ denotes the first Chern class of $L$.
\end{theorem}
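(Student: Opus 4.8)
The plan is to deduce the theorem from Proposition~\ref{KzzProp} together with Calabi's theorem that, on a compact Riemann surface, the extremal K\"ahler metrics are exactly those of constant Gauss curvature.

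\emph{Step 1: reformulation in terms of the curvature $K$.} By the discussion in Section~\ref{prelim}, $M$ is locally spherical (equivalently, totally umbilical) if and only if $r=Pu$ vanishes identically on $X$, where $u=\log(-D\bar D\log h)$. By Proposition~\ref{KzzProp} we have $Pu=-\tfrac12 e^{2u}K_{;zz}$, with $K=K^u$ the Gauss curvature of the metric $ds^2=e^{u}|dz|^2$, so (since $e^{2u}>0$) $M$ is locally spherical if and only if $K_{;zz}\equiv 0$ on $X$. Since $K_{;zz}=D(e^{-u}DK)$, this holds precisely when the $(1,0)$-part of the gradient of $K$ is a holomorphic vector field on $X$; as the scalar curvature of $ds^2$ equals $2K$, this is exactly the condition that $ds^2$ be an extremal K\"ahler metric. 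Thus $M$ is locally spherical $\iff$ $ds^2$ is extremal.

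\emph{Step 2: applying Calabi's theorem.} By Calabi's theorem, an extremal K\"ahler metric on the compact Riemann surface $X$ has constant Gauss curvature, and conversely a constant curvature metric is obviously extremal. When the genus of $X$ is $\geq 2$ this is immediate, as $X$ carries no nonzero holomorphic vector fields; when $X$ is a torus, $\mathrm{grad}^{1,0}K$ must be a constant vector field and integration forces it to vanish, so again $K$ is constant; the substantive case is $X=\bC\bP^1$, where one uses that $J\,\mathrm{grad}\,K$ is then a Killing field, so $ds^2$ is rotationally invariant, and a short ODE computation forces the curvature to be constant. Hence $M$ is locally spherical if and only if the metric $ds^2$ has constant Gauss curvature.

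\emph{Step 3: identifying the curvature form.} It remains to match ``$ds^2$ has constant curvature'' with \eqref{constcurv}. Since $e^u=-D\bar D\log h$, one computes $i\Theta=-i\partial\bar\partial\log h=ie^u\,dz\wedge d\bar z=2e^u\,dx\wedge dy$, so the area (K\"ahler) form of $ds^2=e^u(dx^2+dy^2)$ equals $\tfrac12\,i\Theta$, whence $\mathrm{Area}(X,ds^2)=\tfrac12\int_X i\Theta=\pi\,c_1(L)$, using $\tfrac{i}{2\pi}\int_X\Theta=c_1(L)$. As $ds^2$ is compatible with the complex structure of $X$ and the constant curvature metric in a fixed conformal class is unique up to a positive scalar, ``$ds^2$ has constant curvature'' is equivalent to $ds^2=\pi\,c_1(L)\,g_0$, where $g_0$ denotes the unit-area constant curvature metric, with K\"ahler form $\omega_0$; comparing area forms gives $\tfrac12\,i\Theta=\pi\,c_1(L)\,\omega_0$, that is \eqref{constcurv}, and the converse is read off the same identity. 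The one genuinely non-formal input is Calabi's theorem used in Step~2; everything else is bookkeeping, and this is where I expect the main difficulty to lie.
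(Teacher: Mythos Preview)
Your proof is correct and follows essentially the same route as the paper: both reduce local sphericity to $K_{;zz}\equiv 0$ via Proposition~\ref{KzzProp} and then invoke Calabi's theorem (the paper cites Theorem~3.1 in \cite{Calabi82} directly, while you phrase it through the extremal K\"ahler condition, which is the same thing in complex dimension one). Your Step~3 spells out the identification with \eqref{constcurv} more explicitly than the paper does, and your genus-by-genus sketch in Step~2 is a nice addition, but neither constitutes a different strategy.
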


\begin{proof} If we unwind the definitions in the preliminaries above, we see that in the local trivialization the curvature of a metric $(\cdot,\cdot)$ on $L$ is given by
$$
\frac{i}{2\pi}\,\Theta= \frac{i}{2\pi}\,(-D\bar D \log h)\,  dz\wedge d\bar z=\frac{i}{2\pi}\,a^{-1}dz\wedge d\bar z=\frac{1}{\pi}\,\omega,
$$
where $\omega$ is the K\"ahler form of the metric
$$
ds^2:=e^{2\phi}|dz|^2,\quad 2\phi:=-\log a=u.
$$
By Proposition \ref{KzzProp}, the umbilical locus on $M$, projected on $X$, coincides with the zero locus of $K_{;zz}$, where $K$ is the curvature of $ds^2$. Thus, $M$ is locally spherical if and only if $K_{;zz}$ vanishes identically on $X$.

The proof that \eqref{constcurv} implies that $M$ is locally spherical is now immediate, since the corresponding $K$, by definition, is constant and, hence, $K_{;zz}\equiv 0$. The converse follows also immediately from a theorem of Calabi,  Theorem 3.1 in \cite{Calabi82}, which asserts that the only solutions to the equation $K_{;zz}= 0$ on a compact Riemann surface are the constant curvature metrics.
\end{proof}

\section{Proofs of the main results}\label{Proofs}

In this section, we complete the proofs of the results stated in the introduction. We first remind the reader that a compact, strictly pseudoconvex, three-dimensional CR manifold with a transverse, free CR $U(1)$-action is CR equivalent to a unit circle bundle $M$ in a holomorphic line bundle $\pi\colon L\to  X\cong M/U(1)$ with a positively curved metric $h=(\cdot,\cdot)$; see \cite{Epstein92}, Lemma A4. Consequently, Theorem \ref{Main1} follows immediately from Theorem \ref{PHthm}. Next, a circular hypersurface $M$ in $\bC^2$ as in Theorem \ref{MainCirc} clearly has a transverse, free CR $U(1)$-action and in this case $X=\bC\bP^1$ . Indeed, such $M$ can be realized as a unit circle bundle over $\bC\bP^1$ explicitly by blowing up the origin in $\bC^2$; see e.g. \cite{BlandDuchamp91}. Consequently, Theorem \ref{MainCirc} follows from Theorem \ref{Main1}. We are left with proving Theorem \ref{Maintorus}.

\begin{proof}[Proof of Theorem $\ref{Maintorus}$] In view of Theorem \ref{Main1}, it suffices to prove Theorem \ref{Maintorus} under the assumption that $X\cong M/U(1)$ is a torus. Let $\aut_0(M)$ denote the identity component of $\aut(M)$. If $\aut_0(M)$ is non-compact, then $M$ is CR equivalent to the standard sphere \cite{Lee96}, and is hence totally umbilical (and also cannot be a circle bundle over a torus). Thus, we may assume that $\aut_0(M)$ is compact. We identify two cases:
\begin{itemize}
\item[1.] $\dim_\bR\aut_0(M)\geq 3$. In this case, it follows from the work of E. Cartan \cite{Cartan33} that $M$ is homogeneous and must be on his ``classification list". As mentioned in the introduction, the only compact, non-umbilical homogeneous CR manifolds in Cartan's classification are $\mu_\alpha$ and their covers. Neither can be a unit circle bundle over a torus for topological reasons (since their fundamental groups are finite).
\item[2.] $\dim_\bR\aut_0(M)=2$. A compact, connected Lie group $G=\aut_0(M)$ of dimension 2 is necessarily abelian. This is well known, but the authors have been unable to find a direct reference for this statement, and therefore sketch here briefly a proof for the reader's convenience. First, we note that there are only two Lie algebras, up to isomorphism, of dimension 2, namely the abelian one $\frak g_1$ and another $\frak g_2$ that can be generated by elements $\xi_1,\xi_2$ with $[\xi_1,\xi_2]=\xi_2$. Next, any compact, connected Lie group $G$ is unimodular \cite{HelgaBook}, which implies that the differential of the modular function vanishes, i.e., $\tr\ad_\xi=0$ for all $\xi$ in the Lie algebra $\frak g$. Now, clearly we have $\tr\ad_{\xi}\neq 0$ for $\xi=\xi_1\in \frak g_2$ as above. We conclude that the Lie algebra of a 2-dimensional compact, connected Lie group $G$ is isomorphic to $\frak g_1$ and hence $G$ is abelian. The conclusion of the theorem now follows from Theorem \ref{torusthm} above.
\end{itemize}
These two cases exhaust the remaining possibilities and hence Theorem \ref{Maintorus} is proved.
\end{proof}

%\bibliographystyle{plain}
%\bibliography{mybib}

\def\cprime{$'$}

\end{document}